\documentclass[12pt,reqno]{amsart}
\usepackage{amssymb, a4wide}
\usepackage[all]{xy}
\usepackage{amsmath,amsthm,amscd}
\usepackage{tikz-cd}
\usepackage{enumitem}
\usepackage{mathtools}
\usepackage{graphicx}
\usepackage{bbold}

\newtheorem{theorem}{Theorem}[section]
\newtheorem{proposition}[theorem]{Proposition}
\newtheorem{lemma}[theorem]{Lemma}

\theoremstyle{definition}
\newtheorem{definition}[theorem]{Definition}
\newtheorem{example}[theorem]{Example}

\theoremstyle{remark}
\newtheorem{remark}[theorem]{Remark}

\numberwithin{equation}{section}


\def\al{\alpha}
\def\be{\beta}

\def\vp{\varphi}

\def\ol{\overline}

\def\k{\operatorname{\mathbf{k}}}
\def\Ann{\operatorname{Ann}}


\def\Bider{\operatorname{Bider}}
\def\st{\operatorname{st}}
\def\ad{\operatorname{ad}}
\def\Ad{\operatorname{Ad}}
\def\Z{\operatorname{Z}}












\def\Inn{\operatorname{InnBider}}
\def\Out{\operatorname{OutBider}}

\def\Im{\operatorname{Im}}
\def\Ker{\operatorname{Ker}}


\def\Lb{\operatorname{\textbf{\textsf{Lb}}}}
\def\XLb{\operatorname{\textbf{\textsf{XLb}}}}

\def\Der{\operatorname{Der}}

\def\id{\operatorname{id}}

\newcommand{\lb}[1]{\mathfrak{#1}}
\newcommand{\lie}[1]{\mathbb{#1}}

\def\Act{\operatorname{Act}}

\begin{document}

\title[Actor of a crossed module of Leibniz algebras]{Actor of a crossed module of Leibniz algebras}

\author[J. M. Casas]{Jos\'e Manuel Casas}
\address{Department of Applied Mathematics I, University of Vigo, E. E. Forestal, 36005	Pontevedra, Spain}
\email{jmcasas@uvigo.es}
\author[R. F. Casado]{Rafael ~F.~Casado}
\address{Department of Algebra, Universidade de Santiago de Compostela\\
15782 Santiago de Compostela, Spain}
\email{rapha.fdez@gmail.com}
\author[X. Garc\'ia-Mart\'inez]{Xabier Garc\'ia-Mart\'inez}
\address{Department of Algebra, Universidade de Santiago de Compostela\\
15782 Santiago de Compostela, Spain}
\email{xabier.garcia@usc.es}
\author[E. Khmaladze]{Emzar Khmaladze}
\address{A. Razmadze Mathematical Institute, Tbilisi State University,
	Tamarashvili St. 6, 0177 Tbilisi, Georgia}
\email{e.khmal@gmail.com}

\thanks{The authors were supported by Ministerio de Econom\'ia y Competitividad (Spain), grant MTM2013-43687-P (European FEDER support included).
 The third author was also supported by Xunta de Galicia, grant GRC2013-045 (European FEDER support included) and by an FPU scholarship, Ministerio de Educaci\'on, Cultura y Deporte (Spain).}

\begin{abstract}
We extend to the category of crossed modules of Leibniz algebras the notion of biderivation via the action of a Leibniz algebra. This results into a pair of Leibniz algebras which allow us to construct an object which is the actor under certain circumstances. Additionally, we give a description of an action in the category of crossed modules of Leibniz algebras in terms of equations. Finally, we check that, under the aforementioned conditions, the kernel of the canonical map from a crossed module to its actor coincides with the center and we introduce the notions of crossed module of inner and outer biderivations.
\end{abstract}
\subjclass[2010]{17A30, 17A32, 18A05, 18D05}
\keywords{Leibniz algebra, crossed module, representation, actor}
\maketitle

\section{Introduction}
In the category of groups it is possible to describe an action via an object called the actor, which is given by the group of automorphisms. Its analogue in the category of Lie algebras is the Lie algebra of derivations. Groups and Lie algebras are examples of categories of interest, introduced by Orzech in \cite{Or}. For these categories (see \cite{Mo} for more examples), Casas, Datuashvili and Ladra \cite{CaDaLa} gave a procedure to construct an object that, under certain circumstances, plays the role of actor. For the particular case of Leibniz algebras (resp. associative algebras) that object is the Leibniz algebra of biderivations  (resp. the algebra of bimultipliers).

In \cite{No}, Norrie extended the definition of actor to the $2$-dimensional case by giving a description of the corresponding object in the category of crossed modules of groups. The analogue construction for the category of crossed modules of Lie algebras is given in \cite{CaLa}. Regarding the category of crossed modules of Leibniz algebras, it is not a category of interest, but it is equivalent to the category of $cat^1$-Leibniz algebras (see for example \cite{CaKhLa}), which is itself a modified category of interest in the sense of \cite{BoCaDaUs}. Therefore it makes sense to study representability of actions in such category under the context of modified categories of interest, as it is done in \cite{BoCaDaUs} for crossed modules of associative algebras.

Bearing in mind the ease of the generalization of the actor in the category of groups and Lie algebras to crossed modules, together with the role of the Leibniz algebra of biderivations, it makes sense to assume that the analogous object in the category of crossed modules of Leibniz algebras will be the actor only under certain hypotheses. In \cite{CaInKhLa} the authors gave an equivalent description of an action of a crossed module of groups in terms of equations. A similar description is done for an action of a crossed module of Lie algebras (see \cite{CaCaKhLa}). In order to extend the notion of actor to crossed modules of Leibniz algebras, we generalize the concept of biderivation to the $2$-dimensional case, describe an action in that category in terms of equations and give sufficient conditions for the described object to be the actor.

The article is organized as follows: In Section~\ref{section2} we recall some basic definitions on actions and crossed modules of Leibniz algebras. In Section~\ref{section3} we construct an object that extends the Leibniz algebra of biderivations to the category of crossed modules of Leibniz algebras (Theorem~\ref{theo_action_bider_nqmu}) and give a description of an action in such category in terms of equations. In Section~\ref{section4} we find sufficient conditions for the previous object to be the actor of a given crossed module of Leibniz algebras (Theorem~\ref{theo_equiv_XLb_action}). Finally, in Section~\ref{section5} we prove that the kernel of the canonical homomorphism from a crossed module of Leibniz algebras to its actor coincides with the center of the given crossed module. Additionally, we introduce the notions of crossed module of inner and outer biderivations and show that, given a short exact sequence in the category of crossed modules of Leibniz algebras, it can be extended to a commutative diagram including the actor and the inner and outer biderivations.

\section{Preliminaries}\label{section2}
In this section we recall some needed basic definitions. Throughout the paper we fix a commutative ring with unit $\k$. All algebras are considerer over $\k$. 
\begin{definition}[\cite{Lo_Lbalg}]
	A \emph{Leibniz algebra} $\lb{p}$ is a $\k$-module together with a bilinear operation $[ \ , \ ]\colon\lb{p}\times\lb{p}\to\lb{p}$, called the Leibniz bracket, which satisfies the Leibniz identity:
	\[
	[[p_1,p_2],p_3]=[p_1,[p_2,p_3]]+[[p_1,p_3],p_2],
	\]
	\noindent for all $p_1,p_2,p_3\in\lb{p}$.
	
	A homomorphism of Leibniz algebras is a $\k$-linear map that preserves the bracket.
\end{definition}
We denote by $\Ann(\lb{p})$ (resp. $[\lb{p},\lb{p}]$) the \emph{annihilator} (resp. commutator) of $\lb{p}$, that is the subspace of $\lb{p}$ generated by
\begin{equation*}
\{p_1 \in \lb{p} \ | \ [p_1,p_2] = [p_2,p_1] = 0, \ \text{for all } p_2 \in \lb{p}\}
\end{equation*}
\begin{equation*}
(\text{resp. }\{[p_1,p_2] \ | \ \text{for all } p_1, p_2 \in \lb{p}\})
\end{equation*}
It is obvious that both $\Ann(\lb{p})$ and $[\lb{p},\lb{p}]$ are ideals of $\lb{p}$.
\begin{definition}[\cite{LoPi}]\label{def_action_lb}
	Let $\lb{p}$ and $\lb{m}$ be two Leibniz algebras. An \emph{action} of $\lb{p}$ on $\lb{m}$ consists of a pair of bilinear maps, $\lb{p}\times \lb{m}\to \lb{m}$, $(p,m)\mapsto \left[p,m\right]$ and $\lb{m}\times \lb{p}\to \lb{m}$, $(m,p)\mapsto \left[m,p\right]$, such that
	\begin{align*}
		[p,[m,m']] & = [[p,m],m']-[[p,m'],m], \\	
		[m,[p,m']] & = [[m,p],m']-[[m,m'],p], \\
		[m,[m',p]] & = [[m,m'],p]-[[m,p],m'], \\	
		[m,[p,p']] & = [[m,p],p']-[[m,p'],p], \\
		[p,[m,p']] & = [[p,m],p']-[[p,p'],m], \\
		[p,[p',m]] & = [[p,p'],m]-[[p,m],p'],
	\end{align*}
	\noindent for all $m, m' \in \lb{m}$ and $p, p'\in \lb{p}$.
\end{definition}

Given an action of a Leibniz algebra $\lb{p}$ on $\lb{m}$, we can consider the \emph{semidirect product} Leibniz algebra
$\lb{m}\rtimes \lb{p}$, which consists of the $\k$-module $\lb{m}\oplus \lb{p}$ together with the Leibniz bracket given by
\[
[(m,p),(m',p')]=([m,m']+[p,m']+[m,p'], [p,p']),
\]
\noindent for all $(m,p), (m',p')\in \lb{m}\oplus \lb{p}$.
\begin{definition}[\cite{LoPi}]
	A \emph{crossed module of Leibniz algebras} (or Leibniz crossed module, for short) $(\lb{m}, \lb{p}, \eta)$ is a
	homomorphism of Leibniz algebras $\eta \colon \lb{m}\to \lb{p}$ together with an action of $\lb{p}$ on $\lb{m}$ such that
	\begin{align}
	& \eta([p,m])=[p,\eta(m)] \quad \text{and} \quad \eta([m,p])=[\eta(m),p],\label{XLb1} \tag{XLb1}\\
	& [\eta(m),m']=[m,m']=[m,\eta (m')],\label{XLb2}\tag{XLb2}
	\end{align}
	\noindent for all $m,m' \in \lb{m}$, $p \in \lb{p}$.
	
	A \emph{homomorphism of Leibniz crossed modules} $(\varphi, \psi)$ from $(\lb{m},\lb{p},\eta)$ to $(\lb{n},\lb{q}, \mu)$
	is a pair of Leibniz homomorphisms, $\varphi \colon \lb{m} \to \lb{n}$ and $\psi \colon \lb{p} \to \lb{q}$, such that they commute with $\eta$ and $\mu$ and they respect the actions, that is $\varphi([p,m]) = [\psi(p),\varphi(m)]$ and $\varphi([m,p]) = [\varphi(m),\psi(p)]$ for all $m\in \lb{m}$, $p\in \lb{p}$.
\end{definition}
Identity \eqref{XLb1} will be called \emph{equivariance} and \eqref{XLb2} \emph{Peiffer identity}. We will denote by $\XLb$ the category of Leibniz crossed modules and homomorphisms of Leibniz crossed modules.

Since our aim is to construct a $2$-dimensional generalization of the actor in the category of Leibniz algebras, let us first recall the following definitions.
\begin{definition}[\cite{Lo_Lbalg}]
	Let $\lb{m}$ be a Leibniz algebra. A \emph{biderivation} of $\lb{m}$ is a pair $(d,D)$ of $\k$-linear maps $d,D \colon \lb{m} \to \lb{m}$ such that
	\begin{align}
	d([m,m']) & = [d(m),m'] + [m,d(m')],\label{eq_bider_1}\\
	D([m,m']) & = [D(m),m'] - [D(m'),m],\label{eq_bider_2}\\
	[m,d(m')] & = [m, D(m')],\label{eq_bider_3}
	\end{align}
	\noindent for all $m,m' \in \lb{m}$.
\end{definition}
We will denote by $\Bider(\lb{m})$ the set of all biderivations of $\lb{m}$. It is a Leibniz algebra with the obvious $\k$-module structure and the Leibniz bracket given by
\begin{equation*}\label{bracket_bider_m}
[(d_1,D_1), (d_2,D_2)] = (d_1 d_2 - d_2 d_1, D_1 d_2 - d_2 D_1).
\end{equation*}

It is not difficult to check that, given an element $m\in \lb{m}$, the pair $(\ad(m), \Ad(m))$, with $\ad(m)(m')=-[m',m]$ and $\Ad(m)(m')=[m,m']$ for all $m' \in \lb{m}$, is a biderivation. The pair $(\ad(m), \Ad(m))$ is called \emph{inner biderivation} of $m$.

\section{The main construction}\label{section3}
In this section we extend to crossed modules the Leibniz algebra of biderivations. First we need to translate the notion of a biderivation of a Leibniz algebra into a biderivation between two Leibniz algebras via the action.
\begin{definition}\label{def_bider_qn}
	Given an action of Leibniz algebras of $\lb{q}$ on $\lb{n}$, the set of \emph{biderivations} from $\lb{q}$ to $\lb{n}$, denoted by $\Bider(\lb{q},\lb{n})$, consists of all the pairs $(d,D)$ of $\k$-linear maps, $d,D \colon \lb{q} \to \lb{n}$, such that
	\begin{align}
	d([q,q']) & = [d(q),q'] + [q,d(q')], \label{axiom_bider_qn_1}\\
	D([q,q']) & = [D(q),q'] - [D(q'),q], \label{axiom_bider_qn_2}\\
	[q,d(q')] & = [q,D(q')], \label{axiom_bider_qn_3}
	\end{align}
	\noindent for all $q,q' \in \lb{q}$.
\end{definition}

Given $n \in \lb{n}$, the pair of $\k$-linear maps $(\ad(n),\Ad(n))$, where	$\ad(n) (q) = - [q,n]$ and	$\Ad(n) (q) = [n,q]$ for all $q \in \lb{q}$, is clearly a biderivation from $\lb{q}$ to $\lb{n}$. Observe that $\Bider(\lb{q},\lb{q})$, with the action of $\lb{q}$ on itself defined by its Leibniz bracket, is exactly $\Bider(\lb{q})$.

Let us assume for the rest of the article that $(\lb{n},\lb{q},\mu)$ is a Leibniz crossed module. One can easily check the following result.
\begin{lemma}\label{lemma_bider_from_bider_qn}
	Let $(d,D) \in \Bider(\lb{q},\lb{n})$. Then $(d \mu, D \mu) \in \Bider(\lb{n})$ and $(\mu d, \mu D) \in \Bider(\lb{q})$.
\end{lemma}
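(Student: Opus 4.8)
The plan is to verify the three defining axioms of a biderivation for each of the two pairs directly, exploiting that $\mu$ is a Leibniz homomorphism together with the two structural identities of the crossed module: equivariance \eqref{XLb1} and the Peiffer identity \eqref{XLb2}. The key observation guiding everything is that each pair is handled by a \emph{different} one of these two identities.

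For $(d\mu, D\mu) \in \Bider(\lb{n})$, I would start from $d\mu([n,n']) = d([\mu(n),\mu(n')])$, using that $\mu$ preserves the bracket, and then apply \eqref{axiom_bider_qn_1} to rewrite this as $[d\mu(n),\mu(n')] + [\mu(n),d\mu(n')]$. Both summands are mixed brackets (an element of $\lb{n}$ paired with an element of $\lb{q}$ of the form $\mu(-)$), so the Peiffer identity \eqref{XLb2} converts them into the genuine $\lb{n}$-brackets $[d\mu(n),n']$ and $[n,d\mu(n')]$, which is exactly \eqref{eq_bider_1} for $d\mu$. The same pattern—homomorphism, then the corresponding axiom from Definition~\ref{def_bider_qn}, then Peiffer—yields \eqref{eq_bider_2} from \eqref{axiom_bider_qn_2} and \eqref{eq_bider_3} from \eqref{axiom_bider_qn_3}.

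For $(\mu d, \mu D) \in \Bider(\lb{q})$, the order of operations is reversed and the relevant identity is equivariance rather than Peiffer. I would expand $\mu d([q,q']) = \mu([d(q),q'] + [q,d(q')])$ via \eqref{axiom_bider_qn_1}, and then push $\mu$ through each mixed bracket using \eqref{XLb1} (in the forms $\mu([n,q])=[\mu(n),q]$ and $\mu([q,n])=[q,\mu(n)]$) to obtain $[\mu d(q),q'] + [q,\mu d(q')]$, which is \eqref{eq_bider_1} for $\mu d$. Analogously \eqref{axiom_bider_qn_2} combined with equivariance gives \eqref{eq_bider_2}, and the third axiom follows by applying $\mu$ to both sides of \eqref{axiom_bider_qn_3} and again using equivariance to move $\mu$ inside.

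The computations are entirely mechanical, and I do not expect any genuine obstacle; the only point requiring care is the bookkeeping of the two kinds of bracket. Every expression mixes internal brackets (within $\lb{n}$ or within $\lb{q}$) with action brackets (between $\lb{n}$ and $\lb{q}$), and the argument hinges on correctly recognizing which identity turns a mixed bracket into an internal one: the Peiffer identity absorbs a $\mu$ sitting inside an $\lb{n}$-valued bracket and is what settles the first pair, whereas equivariance extracts a $\mu$ out of an action bracket and is what settles the second.
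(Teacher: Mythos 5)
Your proof is correct: the verification of each axiom via ``apply $\mu$ as a homomorphism, use the corresponding identity from Definition~\ref{def_bider_qn}, then convert mixed brackets with the Peiffer identity \eqref{XLb2} (for $(d\mu,D\mu)$) or equivariance \eqref{XLb1} (for $(\mu d,\mu D)$)'' is exactly the routine check the paper omits when it says ``One can easily check the following result.'' The only cosmetic imprecision is that for \eqref{eq_bider_3} in the first pair the chain is Peiffer--axiom--Peiffer rather than homomorphism--axiom--Peiffer, but the ingredients you identify are the right ones and the argument goes through.
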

We also have the following result.
\begin{lemma}\label{lemma_bider_qn}
	Let $(d_1,D_1)$, $(d_2,D_2) \in \Bider(\lb{q},\lb{n})$. Then
	\begin{align*}
	[D_1 \mu d_2 (q),  q'] & = [D_1 \mu D_2(q), q'], \\
	[q, D_1 \mu d_2 (q')] & = [q, D_1 \mu D_2 (q')],
	\end{align*}
	\noindent for all $q,q' \in \lb{q}$.
\end{lemma}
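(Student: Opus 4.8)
The plan is to reduce both equalities to a single vanishing statement and then exploit the Peiffer relation. By bilinearity, setting $w := d_2(q) - D_2(q) \in \lb{n}$ for the first equality (respectively $w := d_2(q') - D_2(q')$ for the second), each claim becomes the assertion that $[D_1\mu(w), q'] = 0$ (respectively $[q, D_1\mu(w)] = 0$). The crucial input is axiom~\eqref{axiom_bider_qn_3} applied to $(d_2, D_2)$, which gives $[\tilde q, d_2(-)] = [\tilde q, D_2(-)]$ and hence $[\tilde q, w] = 0$ for every $\tilde q \in \lb{q}$; that is, $w$ is annihilated on the left by all of $\lb{q}$.

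First I would transfer this annihilation property to $u := \mu(w) \in \lb{q}$. Applying $\mu$ to $[\tilde q, w] = 0$ and using equivariance~\eqref{XLb1} yields $[\tilde q, u] = [\tilde q, \mu(w)] = \mu([\tilde q, w]) = 0$ for all $\tilde q \in \lb{q}$. With this in hand, the biderivation axioms for $(d_1, D_1)$ collapse the target brackets. For the first equality, axiom~\eqref{axiom_bider_qn_2} applied to $q'$ and $u$ gives $D_1([q', u]) = [D_1(q'), u] - [D_1(u), q']$; since $[q', u] = 0$ the left-hand side vanishes, so $[D_1(u), q'] = [D_1(q'), u]$. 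For the second equality, axiom~\eqref{axiom_bider_qn_3} replaces $D_1$ by $d_1$, namely $[q, D_1(u)] = [q, d_1(u)]$, and then axiom~\eqref{axiom_bider_qn_1} applied to $q$ and $u$ together with $[q, u] = 0$ gives $[q, d_1(u)] = -[d_1(q), u]$.

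It remains to show that the residual mixed brackets $[D_1(q'), u]$ and $[d_1(q), u]$ vanish, and this is where the Peiffer identity~\eqref{XLb2} does the real work. Each has the shape $[x, \mu(w)]$ with $x \in \lb{n}$; applying~\eqref{XLb2} once rewrites it as the bracket $[x, w]$ inside $\lb{n}$, and applying~\eqref{XLb2} a second time rewrites that as $[\mu(x), w]$, a plain left action of $\lb{q}$ on $\lb{n}$. Taking $\tilde q = \mu(x)$ in the left-annihilation $[\tilde q, w] = 0$ makes this zero. Feeding this back gives $[D_1(u), q'] = 0$ and $[q, D_1(u)] = 0$, which are exactly the two asserted equalities. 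The computation is essentially bookkeeping with the six biderivation relations and the two crossed-module axioms; the only step requiring a genuine idea—and hence the main obstacle—is the double use of~\eqref{XLb2} to convert a mixed $\lb{n}$–$\lb{q}$ bracket into a pure $\lb{q}$-action that the left-annihilation of $w$ can kill.
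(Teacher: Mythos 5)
Your proof is correct and takes essentially the same route as the paper's: both apply $D_1$ after transporting through $\mu$ via equivariance \eqref{XLb1}, expand with \eqref{axiom_bider_qn_2} (resp.\ \eqref{axiom_bider_qn_1}), and then kill the residual mixed bracket by using the Peiffer identity \eqref{XLb2} twice in combination with \eqref{axiom_bider_qn_3} for $(d_2,D_2)$. Packaging the difference $w=d_2(q)-D_2(q)$ as an element left-annihilated by $\lb{q}$ is just a tidier bookkeeping of the paper's cancellation $[D_1(q'),\mu d_2(q)]=[D_1(q'),\mu D_2(q)]$.
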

\begin{proof}
	Let $q, q' \in \lb{q}$ and $(d_1,D_1), (d_2,D_2) \in \Bider(\lb{q},\lb{n})$. According to the identity \eqref{axiom_bider_qn_3} for $(d_2,D_2)$, $[q',d_2(q)]=[q',D_2(q)]$, so $D_1 \mu([q',d_2(q)])=D_1 \mu([q',D_2(q)])$. Due to \eqref{axiom_bider_qn_2} and the equivariance of $(\lb{q}, \lb{n}, \mu)$, one can easily derive that
	\begin{equation*}
	[D_1(q'),\mu d_2(q)] - [D_1 \mu d_2(q),q'] = [D_1(q'),\mu D_2(q)] - [D_1 \mu D_2(q),q'].
	\end{equation*}
	\noindent By the Peiffer identity and \eqref{axiom_bider_qn_3} for $(d_2,D_2)$, $[D_1(q'),\mu d_2(q)] = [D_1(q'),\mu D_2(q)]$. Therefore $[D_1 \mu d_2(q),q'] = [D_1 \mu D_2(q),q']$.
	
	The other identity can be proved similarly by using \eqref{axiom_bider_qn_1} and \eqref{axiom_bider_qn_3}.
\end{proof}

$\Bider(\lb{q},\lb{n})$ has an obvious $\k$-module structure. Regarding its Leibniz structure, it is described in the next proposition.
\begin{proposition}
	$\Bider(\lb{q},\lb{n})$ is a Leibniz algebra with the bracket given by
	\begin{equation}\label{eq_bider_qn_bracket}
	[(d_1,D_1),(d_2,D_2)] = (d_1 \mu d_2 - d_2 \mu d_1, D_1 \mu d_2 - d_2 \mu D_1)
	\end{equation}
	\noindent for all $(d_1,D_1), (d_2,D_2) \in \Bider(\lb{q},\lb{n})$.
\end{proposition}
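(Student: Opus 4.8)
The plan is to realise $\Bider(\lb{q},\lb{n})$ as a sub-$\k$-module of a larger Leibniz algebra whose bracket satisfies the Leibniz identity for purely formal reasons, thereby reducing the work to a closure check. On $M = \Hom_{\k}(\lb{q},\lb{n})$ define the multiplication $f * g := f\mu g$; since this is just composition of $f$, the map $\mu\colon\lb{n}\to\lb{q}$ and $g$, it is associative. Equip $M\times M$ with the bracket $[(f_1,g_1),(f_2,g_2)] = (f_1*f_2 - f_2*f_1,\ g_1*f_2 - f_2*g_1)$, which is exactly \eqref{eq_bider_qn_bracket}. I claim $(M\times M,[\ ,\ ])$ is a Leibniz algebra. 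Its first component is the commutator of the associative algebra $(M,*)$, so the first-component Leibniz identity is the Jacobi identity for a commutator and holds automatically. For the second component I would expand the three double brackets $[[X_1,X_2],X_3]$, $[X_1,[X_2,X_3]]$ and $[[X_1,X_3],X_2]$ (for $X_1,X_2,X_3\in M\times M$) using associativity of $*$ alone: the right-hand side produces eight monomials, two cancelling pairs of which vanish, leaving precisely the four monomials of the left-hand side. Thus the Leibniz identity holds on all of $M\times M$ and is inherited by any sub-$\k$-module closed under the bracket.

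The substantive step is to show that $\Bider(\lb{q},\lb{n})$ is closed under this bracket, i.e. that $(d_1\mu d_2 - d_2\mu d_1,\ D_1\mu d_2 - d_2\mu D_1)$ satisfies \eqref{axiom_bider_qn_1}--\eqref{axiom_bider_qn_3} whenever $(d_1,D_1),(d_2,D_2)\in\Bider(\lb{q},\lb{n})$. For \eqref{axiom_bider_qn_1} I would use that $(\mu d_2,\mu D_2)\in\Bider(\lb{q})$ (Lemma~\ref{lemma_bider_from_bider_qn}) to rewrite $\mu d_2([q,q'])$, then apply \eqref{axiom_bider_qn_1} for $d_1$; doing the same with the indices swapped and subtracting, the diagonal terms assemble into the required right-hand side while the four cross terms, of the shape $[\mu d_i(q),d_j(q')]$, collapse to brackets inside $\lb{n}$ by the Peiffer identity \eqref{XLb2} and cancel. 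The check of \eqref{axiom_bider_qn_2} for $D_1\mu d_2 - d_2\mu D_1$ is parallel, now using \eqref{axiom_bider_qn_2} and \eqref{axiom_bider_qn_1} for the two biderivations together with Lemma~\ref{lemma_bider_from_bider_qn} and equivariance \eqref{XLb1} to move $\mu$ across brackets, the surviving cross terms again vanishing by Peiffer.

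The delicate point, which I expect to be the main obstacle, is \eqref{axiom_bider_qn_3}: one must show $[q,(d_1\mu d_2 - d_2\mu d_1)(q')] = [q,(D_1\mu d_2 - d_2\mu D_1)(q')]$. The equality of the first summands, $[q, d_1\mu d_2(q')] = [q, D_1\mu d_2(q')]$, is immediate from \eqref{axiom_bider_qn_3} for $(d_1,D_1)$ read with second argument $\mu d_2(q')\in\lb{q}$. The equality of the second summands, $[q, d_2\mu d_1(q')] = [q, d_2\mu D_1(q')]$, is not a consequence of the biderivation axioms alone, and this is exactly where Lemma~\ref{lemma_bider_qn} is needed: \eqref{axiom_bider_qn_3} for $(d_2,D_2)$ gives $[q, d_2\mu d_1(q')] = [q, D_2\mu d_1(q')]$, Lemma~\ref{lemma_bider_qn} (with the two biderivations interchanged) gives $[q, D_2\mu d_1(q')] = [q, D_2\mu D_1(q')]$, and a final application of \eqref{axiom_bider_qn_3} for $(d_2,D_2)$ returns $[q, d_2\mu D_1(q')]$. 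This establishes closure, so $\Bider(\lb{q},\lb{n})$ is a sub-$\k$-module of $M\times M$ stable under a bracket already known to satisfy the Leibniz identity; hence it is a Leibniz algebra with the bracket \eqref{eq_bider_qn_bracket}, as claimed.
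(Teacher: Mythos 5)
Your proof is correct, and its one substantive step lands exactly where the paper's does. The paper's own proof is a single sentence---``It follows directly from Lemma~\ref{lemma_bider_qn}''---so the only content it makes explicit is the point you isolate as delicate: closure of $\Bider(\lb{q},\lb{n})$ under the bracket, where axiom \eqref{axiom_bider_qn_3} for $(d_1 \mu d_2 - d_2 \mu d_1,\, D_1 \mu d_2 - d_2 \mu D_1)$ needs the chain $[q, d_2\mu d_1(q')] = [q, D_2\mu d_1(q')] = [q, D_2\mu D_1(q')] = [q, d_2\mu D_1(q')]$, with Lemma~\ref{lemma_bider_qn} (indices interchanged) supplying the middle equality and \eqref{axiom_bider_qn_3} for $(d_2,D_2)$ the outer two; your handling of this coincides with what the paper relies on, and your cross-term cancellations via the Peiffer identity \eqref{XLb2} for axioms \eqref{axiom_bider_qn_1} and \eqref{axiom_bider_qn_2} also check out. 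Where you genuinely depart from the paper is in how you dispose of the Leibniz identity itself: instead of the (implicit, left-to-the-reader) direct expansion of double brackets of biderivations, you embed $\Bider(\lb{q},\lb{n})$ as a sub-$\k$-module of $M \times M$, $M = \Hom_{\k}(\lb{q},\lb{n})$ with the associative product $f * g = f \mu g$, and verify the Leibniz identity once and for all on $M \times M$ by a purely formal computation (your eight-monomial bookkeeping for the second component is accurate; this is the Leibniz-algebra analogue of passing to commutators in an associative algebra). This buys you a clean separation of concerns---formal identity on an ambient object, closure for the subobject---and the same device would equally produce the Leibniz structures on $\Bider(\lb{m})$ and on $\Bider(\lb{n},\lb{q},\mu)$, so it is arguably more reusable than the paper's terse appeal to Lemma~\ref{lemma_bider_qn}; the cost is only the need to observe that the bracket \eqref{eq_bider_qn_bracket} extends to all of $M \times M$, which is immediate.
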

\begin{proof}
It follows directly from Lemma~\ref{lemma_bider_qn}.
\end{proof}
Now we state the following definition.
\begin{definition}\label{def_bider_nqmu}
	The set of \emph{biderivations of the Leibniz crossed module} $(\lb{n},\lb{q},\mu)$, denoted by $\Bider(\lb{n},\lb{q},\mu)$, consists of all quadruples $((\sigma_1,\theta_1),(\sigma_2,\theta_2))$ such that
	\begin{align}
	& (\sigma_1,\theta_1) \in \Bider(\lb{n}) \quad \text{and} \quad (\sigma_2,\theta_2) \in \Bider(\lb{q}),\label{axiom_bider_nqmu_1}\\
	& \mu \sigma_1 = \sigma_2 \mu \quad \text{and} \quad \mu \theta_1 = \theta_2 \mu,\label{axiom_bider_nqmu_2}\\
	& \sigma_1([q,n]) = [\sigma_2(q),n] + [q,\sigma_1(n)],\label{axiom_bider_nqmu_3}\\
	& \sigma_1([n,q]) = [\sigma_1(n),q] + [n,\sigma_2(q)],\label{axiom_bider_nqmu_4}\\
	& \theta_1([q,n]) = [\theta_2(q),n] - [\theta_1(n),q],\label{axiom_bider_nqmu_5}\\
	& \theta_1([n,q]) = [\theta_1(n),q] - [\theta_2(q),n],\label{axiom_bider_nqmu_6}\\
	& [q,\sigma_1(n)] = [q,\theta_1(n)],\label{axiom_bider_nqmu_7}\\
	& [n,\sigma_2(q)] = [n,\theta_2(q)],\label{axiom_bider_nqmu_8}
	\end{align}
	\noindent for all $n \in \lb{n}$, $q \in \lb{q}$.
\end{definition}
Given $q \in \lb{q}$, it can be readily checked that $((\sigma^{q}_1,\theta^{q}_1),(\sigma^{q}_2,\theta^{q}_2))$, where
\begin{align*}
\sigma^{q}_1 (n) & = - [n,q], \qquad \theta^{q}_1 (n)  = [q,n],\\
\sigma^{q}_2 (q') & = - [q',q], \qquad \theta^{q}_2 (q')  = [q,q'],
\end{align*}
is a biderivation of the crossed module $(\lb{n}, \lb{q}, \mu)$.

The following lemma is necessary in order to prove that $\Bider(\lb{n},\lb{q},\mu)$ is indeed a Leibniz algebra.
\begin{lemma}\label{lemma_bider_nqmu}
	Let $((\sigma_1,\theta_1),(\sigma_2,\theta_2)), ((\sigma'_1,\theta'_1),(\sigma'_2,\theta'_2)) \in \Bider(\lb{n},\lb{q},\mu)$ and $(d,D) \in \Bider(\lb{q},\lb{n})$. Then
	\begin{equation*}
	\begin{split}
	[D \sigma_2 (q),  q'] & = [D \theta_2(q),  q'], \\
	[q, D \sigma_2 (q')] & = [q, D \theta_2 (q')], \\
	[\theta_1 d (q),  q'] & = [\theta_1 D (q),  q'], \\
	[q, \theta_1 d (q')] & = [q, \theta_1 D (q')],
	\end{split}
	\qquad
	\begin{split}
	[D \sigma_2 (q),  n] & = [D \theta_2(q),  n], \\
	[n, D \sigma_2 (q)] & = [n, D \theta_2 (q)], \\
	[\theta_1 d (q),  n] & = [\theta_1 D (q),  n], \\
	[n, \theta_1 d (q)] & = [n, \theta_1 D (q)],
	\end{split}
	\qquad
	\begin{split}
	[\theta_1 \sigma'_1 (n),  q] & = [\theta_1 \theta'_1(n),  q], \\
	[q, \theta_1 \sigma'_1 (n)] & = [q, \theta_1 \theta'_1(n)], \\
	[\theta_2 \sigma'_2 (q),  n] & = [\theta_2 \theta'_2(q),  n], \\
	[n, \theta_2 \sigma'_2 (q)] & = [n, \theta_2 \theta'_2(q)],
	\end{split}
	\end{equation*}
	\noindent for all $n \in \lb{n}$, $q,q' \in \lb{q}$.
\end{lemma}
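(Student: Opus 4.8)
The plan is to run, for each of the twelve identities, the single manipulation already used in the proof of Lemma~\ref{lemma_bider_qn}. In every case one starts from a \emph{commutation} axiom, that is, an equality of the shape $[\,\cdot\,,\sigma(\,\cdot\,)]=[\,\cdot\,,\theta(\,\cdot\,)]$ (these are \eqref{axiom_bider_qn_3}, \eqref{eq_bider_3}, \eqref{axiom_bider_nqmu_7} and \eqref{axiom_bider_nqmu_8}), applies the relevant outer operator to both sides, expands each side by the matching derivation-type identity, and then cancels the resulting cross term by a \emph{second} application of a commutation axiom. The equality of the surviving terms is precisely the asserted identity.

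For the first column I argue as follows. For $[D\sigma_2(q),q']=[D\theta_2(q),q']$ I start from $[q',\sigma_2(q)]=[q',\theta_2(q)]$, which is \eqref{eq_bider_3} for $(\sigma_2,\theta_2)\in\Bider(\lb{q})$, apply $D$, expand by \eqref{axiom_bider_qn_2}, and cancel $[D(q'),\sigma_2(q)]=[D(q'),\theta_2(q)]$ by \eqref{axiom_bider_nqmu_8} with $n=D(q')$. For $[\theta_1 d(q),q']=[\theta_1 D(q),q']$ I instead start from \eqref{axiom_bider_qn_3} for $(d,D)$, apply $\theta_1$, expand by \eqref{axiom_bider_nqmu_5}, and cancel by \eqref{axiom_bider_qn_3} once more. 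The two remaining first-column identities, whose outer action is $[q,-]$, need one preliminary step: using \eqref{axiom_bider_qn_3} I first replace $D$ by $d$ under the action by $q$ (respectively, using \eqref{axiom_bider_nqmu_7} I replace $\theta_1$ by $\sigma_1$), and then I rerun the $D\sigma_2$ argument with $d$ and \eqref{axiom_bider_qn_1} in place of $D$ and \eqref{axiom_bider_qn_2} (respectively the $\theta_1 d$ argument with $\sigma_1$ and \eqref{axiom_bider_nqmu_3} in place of $\theta_1$ and \eqref{axiom_bider_nqmu_5}).

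The middle column then costs nothing. Each of its identities is an equality of brackets in $\lb{n}$, and the Peiffer identity \eqref{XLb2} rewrites $[a,b]$ as the action $[a,\mu(b)]$ or $[\mu(a),b]$; hence, for example, $[D\sigma_2(q),n]=[D\sigma_2(q),\mu(n)]=[D\theta_2(q),\mu(n)]=[D\theta_2(q),n]$, where the middle equality is the first-column identity applied with $q'=\mu(n)$, and the three companions follow the same way. For the last column I repeat the template with the crossed-module biderivation axioms: $[\theta_1\sigma'_1(n),q]=[\theta_1\theta'_1(n),q]$ starts from \eqref{axiom_bider_nqmu_7} for the primed biderivation, applies $\theta_1$, expands by \eqref{axiom_bider_nqmu_5} and cancels by the primed \eqref{axiom_bider_nqmu_7}; while $[\theta_2\sigma'_2(q),n]=[\theta_2\theta'_2(q),n]$ is obtained by expressing $[\theta_2(\,\cdot\,),n]$ through \eqref{axiom_bider_nqmu_6}, converting the two resulting summands by the primed \eqref{axiom_bider_nqmu_8}, and reassembling by \eqref{axiom_bider_nqmu_6}. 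The two identities with outer action $[q,-]$ and $[n,-]$ again begin with the preliminary swap of $\theta_1$ for $\sigma_1$ via \eqref{axiom_bider_nqmu_7} (respectively $\theta_2$ for $\sigma_2$ via \eqref{axiom_bider_nqmu_8}), after which expansion by \eqref{axiom_bider_nqmu_3} (respectively \eqref{axiom_bider_nqmu_4}) and cancellation by the primed commutation axiom completes the proof.

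I do not expect any genuine conceptual difficulty: all twelve computations are variations on the one performed in Lemma~\ref{lemma_bider_qn}. The only real obstacle is organizational. For each identity one has to select the correct commutation axiom to begin with, the correct outer operator, and the matching derivation-type expansion, all while keeping track of which of the two biderivations carries the prime and whether each symbol lives in $\lb{n}$ or in $\lb{q}$. The two structural shortcuts---that the middle column collapses onto the first through \eqref{XLb2}, and that the $[q,-]$ and $[n,-]$ cases first require trading $D$ for $d$ or $\theta$ for $\sigma$---are what keep the bookkeeping tractable.
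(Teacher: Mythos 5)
Your proposal is correct and takes essentially the same approach as the paper: the paper proves only the first identity, by precisely your argument (apply $D$ to $[q',\sigma_2(q)]=[q',\theta_2(q)]$, expand via \eqref{axiom_bider_qn_2}, cancel the cross term via \eqref{axiom_bider_nqmu_8}), and dismisses the remaining eleven as ``checked similarly.'' Your systematic template---including the preliminary $D\leftrightarrow d$, $\theta\leftrightarrow\sigma$ swaps for the $[q,-]$ and $[n,-]$ cases and the Peiffer-identity reduction of the middle column to the first---is exactly the bookkeeping the paper leaves implicit, and it checks out in every case.
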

\begin{proof}
	Let us show how to prove the first identity; the rest of them can be checked similarly. Let $q, q' \in \lb{q}$, $(d,D) \in \Bider(\lb{q},\lb{n})$ and $((\sigma_1,\theta_1),(\sigma_2,\theta_2)) \in \Bider(\lb{n},\lb{q},\mu)$. Since $(\sigma_2, \theta_2)$ is a biderivation of $\lb{q}$, we have that $[q',\sigma_2(q)]=[q',\theta_2(q)]$. Therefore $D([q',\sigma_2(q)])=D([q',\theta_2(q)])$. Directly from \eqref{axiom_bider_qn_2}, we get that
	\begin{equation*}
	[D(q'),\sigma_2(q)] - [D \sigma_2(q),q'] = [D(q'),\theta_2(q)] - [D \theta_2(q),q'].
	\end{equation*}
	\noindent Thus, due to \eqref{axiom_bider_nqmu_8}, $[D(q'),\sigma_2(q)] = [D(q'),\theta_2(q)]$. Hence, $[D \sigma_2(q),q'] = [D \theta_2(q),q']$.
\end{proof}
The $\k$-module structure of $\Bider(\lb{n}, \lb{q}, \mu)$ is evident, while its Leibniz structure is described as follows.
\begin{proposition}
	$\Bider(\lb{n}, \lb{q}, \mu)$ is a Leibniz algebra with the bracket given by
	\begin{multline}\label{eq_bider_nqmu_bracket}
	[((\sigma_1,\theta_1),(\sigma_2,\theta_2)), ((\sigma'_1,\theta'_1),(\sigma'_2,\theta'_2))] = ([(\sigma_1,\theta_1),(\sigma'_1,\theta'_1)],[(\sigma_2,\theta_2),(\sigma'_2,\theta'_2)]) \\
	= ((\sigma_1 \sigma'_1 - \sigma'_1 \sigma_1, \theta_1 \sigma'_1 - \sigma'_1 \theta_1),(\sigma_2 \sigma'_2 - \sigma'_2 \sigma_2, \theta_2 \sigma'_2 - \sigma'_2 \theta_2)),
	\end{multline}
	\noindent for all $((\sigma_1,\theta_1),(\sigma_2,\theta_2)), ((\sigma'_1,\theta'_1),(\sigma'_2,\theta'_2)) \in \Bider(\lb{n},\lb{q},\mu)$.
\end{proposition}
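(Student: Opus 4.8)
The plan is to realise $\Bider(\lb{n},\lb{q},\mu)$ as a subalgebra of the direct product Leibniz algebra $\Bider(\lb{n}) \times \Bider(\lb{q})$. Indeed, formula \eqref{eq_bider_nqmu_bracket} is nothing but the componentwise bracket inherited from the two factors, each of which is already known to be a Leibniz algebra. Since the Leibniz identity holds componentwise in a direct product, once we verify that the bracket of two elements of $\Bider(\lb{n},\lb{q},\mu)$ lands back in $\Bider(\lb{n},\lb{q},\mu)$, the Leibniz identity is automatic and $\Bider(\lb{n},\lb{q},\mu)$ becomes a Leibniz algebra. Thus the whole proof reduces to checking that the quadruple
\[
\big((\bar\sigma_1,\bar\theta_1),(\bar\sigma_2,\bar\theta_2)\big) := \big((\sigma_1\sigma'_1-\sigma'_1\sigma_1,\ \theta_1\sigma'_1-\sigma'_1\theta_1),(\sigma_2\sigma'_2-\sigma'_2\sigma_2,\ \theta_2\sigma'_2-\sigma'_2\theta_2)\big)
\]
again satisfies the eight axioms \eqref{axiom_bider_nqmu_1}--\eqref{axiom_bider_nqmu_8}.

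First I would dispatch the easy axioms. Axiom \eqref{axiom_bider_nqmu_1} holds because $(\bar\sigma_1,\bar\theta_1)$ and $(\bar\sigma_2,\bar\theta_2)$ are, by construction, the brackets computed inside $\Bider(\lb{n})$ and $\Bider(\lb{q})$, which are closed. For axiom \eqref{axiom_bider_nqmu_2} I would simply substitute the intertwining relations $\mu\sigma_1=\sigma_2\mu$, $\mu\theta_1=\theta_2\mu$ and their primed versions into the composites, e.g. $\mu\sigma_1\sigma'_1 = \sigma_2\mu\sigma'_1 = \sigma_2\sigma'_2\mu$, obtaining $\mu\bar\sigma_1 = \bar\sigma_2\mu$ and likewise $\mu\bar\theta_1 = \bar\theta_2\mu$.

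Next come the four equivariance-type axioms \eqref{axiom_bider_nqmu_3}--\eqref{axiom_bider_nqmu_6}. Here I would expand, for instance, $\bar\sigma_1([q,n]) = \sigma_1\sigma'_1([q,n]) - \sigma'_1\sigma_1([q,n])$ by applying the relevant axiom first to the inner and then to the outer map. Each expansion produces the two ``diagonal'' terms $[\bar\sigma_2(q),n]$ and $[q,\bar\sigma_1(n)]$ together with mixed cross terms such as $[\sigma'_2(q),\sigma_1(n)]$ and $[\sigma_2(q),\sigma'_1(n)]$; the point is that these mixed terms occur identically in the expansions of $\sigma_1\sigma'_1$ and of $\sigma'_1\sigma_1$, so they cancel in the difference and leave exactly the required identity. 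The same bookkeeping handles \eqref{axiom_bider_nqmu_4}--\eqref{axiom_bider_nqmu_6}.

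The hard part will be the two Peiffer-type axioms \eqref{axiom_bider_nqmu_7} and \eqref{axiom_bider_nqmu_8}, and this is precisely where Lemma~\ref{lemma_bider_nqmu} is needed. Consider \eqref{axiom_bider_nqmu_7}: I must show $[q,\bar\sigma_1(n)] = [q,\bar\theta_1(n)]$, that is,
\[
[q,\sigma_1\sigma'_1(n)]-[q,\sigma'_1\sigma_1(n)] = [q,\theta_1\sigma'_1(n)]-[q,\sigma'_1\theta_1(n)].
\]
The outer terms match immediately, since applying \eqref{axiom_bider_nqmu_7} for $(\sigma_1,\theta_1)$ to the argument $\sigma'_1(n)$ gives $[q,\sigma_1\sigma'_1(n)] = [q,\theta_1\sigma'_1(n)]$. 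What remains is $[q,\sigma'_1\sigma_1(n)] = [q,\sigma'_1\theta_1(n)]$, where the two maps compared differ in the \emph{inner} slot, so the defining axioms do not resolve it directly. My approach is to first use \eqref{axiom_bider_nqmu_7} for $(\sigma'_1,\theta'_1)$ to rewrite both sides with outer map $\theta'_1$, reducing the claim to $[q,\theta'_1\sigma_1(n)] = [q,\theta'_1\theta_1(n)]$, which is exactly the identity $[q,\theta_1\sigma'_1(n)]=[q,\theta_1\theta'_1(n)]$ of Lemma~\ref{lemma_bider_nqmu} applied to the two biderivations in the opposite order. Axiom \eqref{axiom_bider_nqmu_8} is handled symmetrically, interchanging the roles of $\lb{n}$ and $\lb{q}$ and invoking the companion identity of the same lemma. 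Once all eight axioms are verified, closedness is established and, by the direct-product observation, the proof is complete.
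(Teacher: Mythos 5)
Your proposal is correct and takes essentially the same route as the paper: the paper's entire proof is the remark that the proposition ``follows directly from Lemma~\ref{lemma_bider_nqmu}'', and that lemma is exactly what you invoke for the two Peiffer-type axioms \eqref{axiom_bider_nqmu_7} and \eqref{axiom_bider_nqmu_8}, the only non-routine part of the closure check. Your framing of the Leibniz identity as being inherited componentwise from the direct product $\Bider(\lb{n})\times\Bider(\lb{q})$ is already implicit in the paper's componentwise bracket formula \eqref{eq_bider_nqmu_bracket}, so your write-up simply supplies the details the paper leaves to the reader.
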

\begin{proof}
It follows directly from Lemma~\ref{lemma_bider_nqmu}.
\end{proof}
\begin{proposition}\label{prop_delta_lb_morph}
	The $\k$-linear map $\Delta \colon \Bider(\lb{q},\lb{n}) \to \Bider(\lb{n},\lb{q},\mu)$, given by $(d,D) \mapsto ((d \mu, D \mu),(\mu d, \mu D))$ is a homomorphism of Leibniz algebras.
\end{proposition}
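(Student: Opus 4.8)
The plan is to prove three things: that $\Delta$ actually lands in $\Bider(\lb{n},\lb{q},\mu)$ (well-definedness), that it is $\k$-linear, and that it preserves the Leibniz bracket. Linearity is immediate, since pre- and post-composition with the fixed $\k$-linear map $\mu$ are linear operations on $\k$-linear maps, so the substance of the proof lies in the first and third points.

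For well-definedness, fix $(d,D) \in \Bider(\lb{q},\lb{n})$ and write $\sigma_1 = d\mu$, $\theta_1 = D\mu$, $\sigma_2 = \mu d$, $\theta_2 = \mu D$; I must verify axioms \eqref{axiom_bider_nqmu_1}--\eqref{axiom_bider_nqmu_8}. Axiom \eqref{axiom_bider_nqmu_1} is exactly Lemma~\ref{lemma_bider_from_bider_qn}, and \eqref{axiom_bider_nqmu_2} is immediate from associativity of composition, since both sides of each equation equal $\mu d \mu$ (resp.\ $\mu D \mu$). For \eqref{axiom_bider_nqmu_3}--\eqref{axiom_bider_nqmu_6} the recipe is uniform: expand the left-hand side by pushing $\mu$ inside the bracket via the equivariance \eqref{XLb1}, apply the biderivation axiom \eqref{axiom_bider_qn_1} or \eqref{axiom_bider_qn_2} for $(d,D)$, and then recognise each resulting term on the right by moving $\mu$ across a single bracket slot with the Peiffer identity \eqref{XLb2}; for instance $[d(q),\mu(n)] = [d(q),n] = [\mu d(q),n] = [\sigma_2(q),n]$. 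Axiom \eqref{axiom_bider_nqmu_7} is the most direct of all: it is precisely \eqref{axiom_bider_qn_3} evaluated at $q' = \mu(n)$.

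The one step requiring a genuine idea is \eqref{axiom_bider_nqmu_8}, that is $[n,\mu d(q)] = [n,\mu D(q)]$, because \eqref{axiom_bider_qn_3} cannot be applied directly: its hypothesis concerns an action by an element of $\lb{q}$, whereas after one use of Peiffer we are left with the internal bracket $[n,d(q)]$ in $\lb{n}$. The trick is to invoke the Peiffer identity a second time, in the opposite slot, to re-express this internal bracket as an action by an element of the image of $\mu$, namely $[n,\mu d(q)] = [n,d(q)] = [\mu(n),d(q)]$. Since $\mu(n) \in \lb{q}$, axiom \eqref{axiom_bider_qn_3} now does apply and gives $[\mu(n),d(q)] = [\mu(n),D(q)]$; reversing the two Peiffer steps yields $[\mu(n),D(q)] = [n,D(q)] = [n,\mu D(q)]$, as required. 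I expect this to be the main obstacle, the remaining axioms being bookkeeping once the interplay between equivariance and Peiffer is in place.

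Finally, for bracket preservation I would compute both sides of $\Delta([(d_1,D_1),(d_2,D_2)]) = [\Delta(d_1,D_1),\Delta(d_2,D_2)]$ directly. Expanding the left-hand side with \eqref{eq_bider_qn_bracket} and then applying $\Delta$ produces a quadruple whose first component is $((d_1\mu d_2 - d_2\mu d_1)\mu,\ (D_1\mu d_2 - d_2\mu D_1)\mu)$, and analogously for the second; expanding the right-hand side with \eqref{eq_bider_nqmu_bracket} gives, for the first entry, $\sigma_1\sigma'_1 - \sigma'_1\sigma_1 = d_1\mu d_2\mu - d_2\mu d_1\mu$. Because composition is associative, $(d_1\mu d_2)\mu = d_1\mu d_2\mu$, so the two expressions agree term by term, and the same check disposes of the remaining three entries. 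Thus bracket preservation is purely formal once well-definedness has been secured.
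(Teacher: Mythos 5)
Your proof is correct and takes essentially the same route as the paper, whose own proof simply cites Lemma~\ref{lemma_bider_from_bider_qn} for well-definedness and dismisses the rest as straightforward calculation. Your write-up supplies exactly the details the paper omits — in particular the double application of the Peiffer identity \eqref{XLb2} needed to reduce axiom \eqref{axiom_bider_nqmu_8} to \eqref{axiom_bider_qn_3}, and the purely formal associativity check for bracket preservation — so it is a faithful, fully explicit version of the paper's argument.
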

\begin{proof}
$\Delta$ is well defined due to Lemma~\ref{lemma_bider_from_bider_qn}, while checking that it is a homomorphism of Leibniz algebras is a matter of straightforward calculations.
\end{proof}
	Since we aspire to make $\Delta$ into a Leibniz crossed module, we need to define an action of $\Bider(\lb{n},\lb{q},\mu)$ on $\Bider(\lb{q},\lb{n})$.
	\begin{theorem}\label{theo_action_bider_nqmu}
		There is an action of $\Bider(\lb{n},\lb{q},\mu)$ on $\Bider(\lb{q},\lb{n})$ given by:
		\begin{align}
		[((\sigma_1,\theta_1),(\sigma_2,\theta_2)),(d,D)] & = (\sigma_1 d - d \sigma_2, \theta_1 d - d \theta_2), \label{eq_action_Xbider_left}\\
		[(d,D),((\sigma_1,\theta_1),(\sigma_2,\theta_2))] & = (d \sigma_2 - \sigma_1 d, D \sigma_2 - \sigma_1 D),\label{eq_action_Xbider_right}
		\end{align}
		\noindent for all $((\sigma_1,\theta_1),(\sigma_2,\theta_2)) \in \Bider(\lb{n},\lb{q},\mu)$, $(d,D) \in \Bider(\lb{q},\lb{n})$. Moreover, the Leibniz homomorphism $\Delta$ (see Proposition~\ref{prop_delta_lb_morph}) together with the above action is a Leibniz crossed module.
	\end{theorem}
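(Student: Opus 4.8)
The plan is to verify, in order: (i) that the two formulas \eqref{eq_action_Xbider_left} and \eqref{eq_action_Xbider_right} actually take values in $\Bider(\lb{q},\lb{n})$; (ii) the six compatibility identities of Definition~\ref{def_action_lb} that make these formulas a genuine action of $\Bider(\lb{n},\lb{q},\mu)$ on $\Bider(\lb{q},\lb{n})$; and (iii) the equivariance \eqref{XLb1} and Peiffer \eqref{XLb2} axioms for $\Delta$.

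For (i), fix $b=((\sigma_1,\theta_1),(\sigma_2,\theta_2))\in\Bider(\lb{n},\lb{q},\mu)$ and $c=(d,D)\in\Bider(\lb{q},\lb{n})$. To see that $[b,c]=(\sigma_1 d-d\sigma_2,\theta_1 d-d\theta_2)$ satisfies \eqref{axiom_bider_qn_1}, I would expand $\sigma_1 d([q,q'])$ using \eqref{axiom_bider_qn_1} for $(d,D)$ followed by the equivariance-type identities \eqref{axiom_bider_nqmu_3} and \eqref{axiom_bider_nqmu_4}, expand $d\sigma_2([q,q'])$ using \eqref{eq_bider_1} for $(\sigma_2,\theta_2)$ followed by \eqref{axiom_bider_qn_1} for $d$, and observe that the mixed terms $[d(q),\sigma_2(q')]$ and $[\sigma_2(q),d(q')]$ cancel; \eqref{axiom_bider_qn_2} is entirely analogous, now invoking \eqref{axiom_bider_nqmu_5}, \eqref{axiom_bider_nqmu_6} and \eqref{eq_bider_2}. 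The delicate axiom is \eqref{axiom_bider_qn_3}: here $[q,\sigma_1 d(q')]=[q,\theta_1 d(q')]$ comes straight from \eqref{axiom_bider_nqmu_7}, but the equality $[q,d\sigma_2(q')]=[q,d\theta_2(q')]$ cannot be read off directly. I would obtain it by the chain $[q,d\sigma_2(q')]=[q,D\sigma_2(q')]=[q,D\theta_2(q')]=[q,d\theta_2(q')]$, where the outer equalities use \eqref{axiom_bider_qn_3} for $(d,D)$ and the middle one is the second identity of the first block of Lemma~\ref{lemma_bider_nqmu}; the same mechanism (using instead the fourth identity of that block together with \eqref{axiom_bider_nqmu_7}) handles \eqref{axiom_bider_qn_3} for the right action $[c,b]=(d\sigma_2-\sigma_1 d,D\sigma_2-\sigma_1 D)$. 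This step, reconciling the $d$- and $D$-components only after bracketing, is where Lemma~\ref{lemma_bider_nqmu} is indispensable, and I expect it to be the main obstacle.

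For (ii), I would substitute the brackets \eqref{eq_bider_qn_bracket} and \eqref{eq_bider_nqmu_bracket} together with the action formulas into each of the six identities of Definition~\ref{def_action_lb} and expand. Since both components of the left action use only $d$ and the right action treats $\sigma_1,\sigma_2$ in parallel, these reduce to associativity of composition after pushing $\mu$ across the $\sigma$'s and $\theta$'s by means of \eqref{axiom_bider_nqmu_2}; for instance, in the first identity the crossed terms $d_1\mu\sigma_1 d_2$ and $d_2\mu\sigma_1 d_1$ cancel between $[[b,c],c']$ and $[[b,c'],c]$, leaving precisely $[b,[c,c']]$, and the second-component computation runs identically with $\theta_1,\theta_2$ in place of $\sigma_1,\sigma_2$. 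I expect all six to follow by bookkeeping of composites, with no new input beyond \eqref{axiom_bider_nqmu_2}.

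For (iii), the equivariance $\Delta([b,c])=[b,\Delta(c)]$ and $\Delta([c,b])=[\Delta(c),b]$ follows by comparing $\Delta$ applied to the action formulas with the bracket \eqref{eq_bider_nqmu_bracket} of $b$ against $\Delta(c)=((d\mu,D\mu),(\mu d,\mu D))$; each of the four resulting component equalities is an instance of $\mu\sigma_1=\sigma_2\mu$ or $\mu\theta_1=\theta_2\mu$ from \eqref{axiom_bider_nqmu_2}. The Peiffer identity is the most transparent part: reading $\Delta(d_1,D_1)$ as an element of $\Bider(\lb{n},\lb{q},\mu)$ and feeding it into \eqref{eq_action_Xbider_left} and \eqref{eq_action_Xbider_right}, both $[\Delta(c),c']$ and $[c,\Delta(c')]$ collapse to $(d_1\mu d_2-d_2\mu d_1,\,D_1\mu d_2-d_2\mu D_1)$, which is exactly the bracket $[c,c']$ of \eqref{eq_bider_qn_bracket}. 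Thus \eqref{XLb2} holds essentially by construction, which together with (i) and (ii) shows that $\Delta$ with the given action is a Leibniz crossed module.
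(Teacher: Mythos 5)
Your proposal is correct and follows essentially the same route as the paper's proof: the same expansion-and-cancellation argument for axioms \eqref{axiom_bider_qn_1} and \eqref{axiom_bider_qn_2}, the identical chain $[q,d\sigma_2(q')]=[q,D\sigma_2(q')]=[q,D\theta_2(q')]=[q,d\theta_2(q')]$ via Lemma~\ref{lemma_bider_nqmu} and \eqref{axiom_bider_nqmu_7} for the delicate axiom \eqref{axiom_bider_qn_3}, routine verification of the action identities, and equivariance/Peiffer via \eqref{axiom_bider_nqmu_2} and the definition of $\Delta$. Your only departure is that you make explicit (correctly) which identity of Lemma~\ref{lemma_bider_nqmu} handles \eqref{axiom_bider_qn_3} for the right action, a step the paper dismisses as ``a similar procedure.''
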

	\begin{proof}
		Let $(d,D) \in \Bider(\lb{q},\lb{n})$ and $((\sigma_1,\theta_1),(\sigma_2,\theta_2)) \in \Bider(\lb{n},\lb{q},\mu)$. Checking that both $(\sigma_1 d - d \sigma_2, \theta_1 d - d \theta_2)$ and $(d \sigma_2 - \sigma_1 d, D \sigma_2 - \sigma_1 D)$ satisfy conditions \eqref{axiom_bider_qn_1} and \eqref{axiom_bider_qn_2} requires the combined use of the properties satisfied by the elements in $\Bider(\lb{n},\lb{q},\mu)$ and $(d,D)$, but calculations are fairly straightforward. As an example, we show how to prove that $(\sigma_1 d - d \sigma_2, \theta_1 d - d \theta_2)$ verifies \eqref{axiom_bider_qn_1}. Let $q, q' \in \lb{q}$. Then
		\begin{align*}
		(\sigma_1 d - d \sigma_2) ([q,q'])  = & \sigma_1 ([d(q),q'] + [q,d(q')]) - d([\sigma_2(q), q'] + [q,\sigma_2(q')]) \\
		 = & [\sigma_1 d (q), q'] + [d(q), \sigma_2(q')] + [\sigma_2(q),d(q')] + [q,\sigma_1 d(q')] \\
		& - [d \sigma_2(q), q'] - [\sigma_2(q), d(q')] - [d(q),\sigma_2(q')] - [q,d \sigma_2(q')] \\
		 = & [(\sigma_1 d - d \sigma_2) (q), q'] + [q, (\sigma_1 d - d \sigma_2) (q')].
		\end{align*}
		As for condition \eqref{axiom_bider_qn_3}, in the case of $(\sigma_1 d - d \sigma_2, \theta_1 d - d \theta_2)$, it follows from \eqref{axiom_bider_nqmu_7}, the identity \eqref{axiom_bider_qn_3} for $(d,D)$ and the second identity in the first column from Lemma~\ref{lemma_bider_nqmu}. Namely,
		\begin{align*}
		[q, (\sigma_1 d - d \sigma_2) (q')] & = [q, \sigma_1 d(q')] - [q, d \sigma_2(q')] = [q, \theta_1 d(q')] - [q, D \sigma_2(q')] \\
		& = [q, \theta_1 d(q')] - [q, D \theta_2(q')] = [q, \theta_1 d(q')] - [q, d \theta_2(q')],
		\end{align*}
		\noindent for all $q,q' \in \lb{q}$. A similar procedure allows to prove that $(d \sigma_2 - \sigma_1 d, D \sigma_2 - \sigma_1 D)$ satisfies condition \eqref{axiom_bider_qn_3} as well.
		
		Routine calculations show that \eqref{eq_action_Xbider_left} and \eqref{eq_action_Xbider_right} together with the definition of the brackets in $\Bider(\lb{n},\lb{q},\mu)$ and $\Bider(\lb{q},\lb{n})$ provide an action of Leibniz algebras.
		
		It only remains to prove that $\Delta$ satisfies the equivariance and the Peiffer identity. It is immediate to check that
		\begin{multline}\label{equiv_Xlb_actor_1}
		\Delta ([((\sigma_1,\theta_1),(\sigma_2,\theta_2)),(d,D)]) = ((\sigma_1 d \mu - d \sigma_2 \mu, \theta_1 d \mu - d \theta_2 \mu),\\ (\mu \sigma_1 d - \mu d \sigma_2, \mu \theta_1 d - \mu d \theta_2)),
		\end{multline}
		\noindent while
		\begin{multline}\label{equiv_Xlb_actor_2}
		[((\sigma_1,\theta_1),(\sigma_2,\theta_2)),\Delta(d,D)] = ((\sigma_1 d \mu - d \mu \sigma_1, \theta_1 d \mu - d \mu \theta_1),\\ (\sigma_2 \mu d - \mu d \sigma_2, \theta_2 \mu d - \mu d \theta_2)).
		\end{multline}
		Condition \eqref{axiom_bider_nqmu_2} guarantees that $\eqref{equiv_Xlb_actor_1} = \eqref{equiv_Xlb_actor_2}$. The other identity can be checked similarly. The Peiffer identity follows immediately from \eqref{eq_action_Xbider_left} and \eqref{eq_action_Xbider_right} along the definition of $\Delta$ and the bracket in $\Bider(\lb{q},\lb{n})$.
	\end{proof}

\section{The actor}\label{section4}

In \cite{Or}, Orzech introduced the notion of category of interest, which is nothing but a category of groups with operations verifying two extra conditions. $\Lb$ is a category of interest, although $\XLb$ is not. Nevertheless, it is equivalent to the category of $cat^1$-Leibniz algebras (see for example \cite{CaKhLa}), which is itself a modified category of interest in the sense of \cite{BoCaDaUs}.  So it makes sense to study representability of actions in $\XLb$ under the context of modified categories of interest, as it is done in \cite{BoCaDaUs} for crossed modules of associative algebras.
However, since $\XLb$ is an example of semi-abelian categories, and an action is the same as a split extension  in any semi-abelian category \cite[Lemma 1.3]{BoJaKe}, we choose a different, more combinatorial approach to the problem, by constructing the semidirect product (split extension) of Leibniz crossed modules.

We use the term \emph{actor} (as in \cite{BoCaDaUs, CaDaLa}) for an object which represents actions in a semi-abelian category, the general definition of which is known from \cite{BoJaKe} under the name \emph{split extension classifier}.

\

We need to remark that, given a Leibniz algebra $\lb{m}$, $\Bider(\lb{m})$ is the actor of $\lb{m}$ under certain conditions. In particular, the following result is proved in  \cite{CaDaLa}.
\begin{proposition}[\cite{CaDaLa}]\label{prop_Lb_ann_perfect}
	Let $\lb{m}$ be a Leibniz algebra such that $\Ann(\lb{m})=0$ or $[\lb{m},\lb{m}]=\lb{m}$. Then $\Bider(\lb{m})$ is the actor of $\lb{m}$.
\end{proposition}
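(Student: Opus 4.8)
The plan is to realise $\Bider(\lb m)$ as the object representing actions on $\lb m$: I would construct, naturally in $\lb q$, a bijection between actions of a Leibniz algebra $\lb q$ on $\lb m$ (Definition~\ref{def_action_lb}) and homomorphisms of Leibniz algebras $\lb q \to \Bider(\lb m)$, induced by a single \emph{universal} action of $\Bider(\lb m)$ on $\lb m$. The whole weight of the hypothesis falls on one verification only, namely that this universal action is a genuine action.

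Given an action of $\lb q$ on $\lb m$ I would send $q$ to the pair $(d_q,D_q)$ with $d_q(m)=-[m,q]$ and $D_q(m)=[q,m]$. The three identities of Definition~\ref{def_action_lb} that involve a single element of $\lb q$ become exactly \eqref{eq_bider_1}, \eqref{eq_bider_2}, \eqref{eq_bider_3}, so $(d_q,D_q)\in\Bider(\lb m)$; the remaining three, involving two elements of $\lb q$, say precisely that $q\mapsto(d_q,D_q)$ preserves the bracket $[(d_1,D_1),(d_2,D_2)]=(d_1d_2-d_2d_1,\,D_1d_2-d_2D_1)$. Conversely I would equip $\Bider(\lb m)$ with the evaluation rule $[(d,D),m]=D(m)$, $[m,(d,D)]=-d(m)$ and restrict it along a homomorphism $\lb q\to\Bider(\lb m)$; this is compatible with the inner-biderivation map $m\mapsto(\ad(m),\Ad(m))$, and once the evaluation rule is known to be an action the two assignments are mutually inverse and natural in $\lb q$. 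Both constructions are formal and use no hypothesis.

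Everything therefore reduces to checking the six axioms of Definition~\ref{def_action_lb} for the evaluation rule. Five of them follow at once from \eqref{eq_bider_1}, \eqref{eq_bider_2}, \eqref{eq_bider_3} and the definition of the bracket of $\Bider(\lb m)$; the only delicate one is $[(d,D),[(d',D'),m]]=[[(d,D),(d',D')],m]-[[(d,D),m],(d',D')]$, which unwinds to the single identity $D\bigl(D'(m)\bigr)=D\bigl(d'(m)\bigr)$, i.e. $D(w)=0$ for $w:=D'(m)-d'(m)$. Here I expect the main obstacle, and it is exactly where the hypothesis is used. By \eqref{eq_bider_3} for $(d',D')$ the element $w$ lies in the right annihilator $\{x\in\lb m:[a,x]=0\text{ for all }a\in\lb m\}$, which contains $\Ann(\lb m)$. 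I would then prove the key lemma that $D$ maps this right annihilator into $\Ann(\lb m)$: applying \eqref{eq_bider_1} and \eqref{eq_bider_2} for $(d,D)$ to the vanishing bracket $[a,w]$ gives $[a,d(w)]=0$ and $[D(w),a]=0$, while \eqref{eq_bider_3} turns the first into $[a,D(w)]=0$; hence $D(w)\in\Ann(\lb m)$.

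With this lemma the two cases close quickly. If $\Ann(\lb m)=0$ then $D(w)\in\Ann(\lb m)=0$ outright. If $[\lb m,\lb m]=\lb m$ it suffices, by linearity, to treat $m=[u,v]$; expanding $D'([u,v])$ and $d'([u,v])$ by \eqref{eq_bider_1} and \eqref{eq_bider_2}, applying $D$ and reorganising again with \eqref{eq_bider_2}, one expresses $D(w)$ as a sum of terms $[D(D'(u)-d'(u)),v]$, $[D(u),D'(v)-d'(v)]$ and their analogues, each of which vanishes because $D'(u)-d'(u)$ and $D'(v)-d'(v)$ lie in the right annihilator while their images under $D$ lie in $\Ann(\lb m)$ by the lemma. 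In either case the evaluation rule is a genuine action, the bijection of the second paragraph holds, and $\Bider(\lb m)$ is the actor of $\lb m$. The necessity of some such hypothesis is already visible for an abelian $\lb m$, where every pair of linear maps is a biderivation and the identity $D(D'(m))=D(d'(m))$ fails.
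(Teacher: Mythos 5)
Your proof is correct, but it takes a genuinely different route from the paper: the paper does not prove this proposition at all, it imports it from \cite{CaDaLa}, where it follows from the general machinery of universal strict general actors in categories of interest. You instead give a direct, self-contained verification, and its structure is sound. The correspondence between actions of $\lb{q}$ on $\lb{m}$ and homomorphisms $\lb{q}\to\Bider(\lb{m})$ is indeed formal except for the last axiom of Definition~\ref{def_action_lb} applied to the evaluation rule, which unwinds exactly to $DD'(m)=Dd'(m)$; your key lemma (that $D$ carries the right annihilator into $\Ann(\lb{m})$, obtained by applying \eqref{eq_bider_1}, \eqref{eq_bider_2}, \eqref{eq_bider_3} to the vanishing brackets $[a,w]$) is precisely what is needed, and both cases close as you claim --- in the case $[\lb{m},\lb{m}]=\lb{m}$ every residual term after expanding $D\bigl(D'([u,v])-d'([u,v])\bigr)$ has either an element of $\Ann(\lb{m})$ on one side of a bracket or a right-annihilator element on the right, so it vanishes. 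It is worth observing that your pivotal identity is exactly Lemma~\ref{lemma_bider_combined}~(i) of the paper (take $(\sigma,\theta)=(d,D)$ and $(\sigma',\theta')=(d',D')$), which the paper states for its two-dimensional construction and dismisses as ``straightforward''; your two-case argument is in effect the missing proof of that lemma, and your strategy is the one-dimensional prototype of the paper's Theorem~\ref{theo_equiv_XLb_action}. What the citation to \cite{CaDaLa} buys is uniformity --- one theory covering all categories of interest at once; what your argument buys is transparency about exactly where and why the hypothesis $\Ann(\lb{m})=0$ or $[\lb{m},\lb{m}]=\lb{m}$ enters.

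One expositional slip should be fixed: the three axioms of Definition~\ref{def_action_lb} involving two elements of $\lb{q}$ do not ``say precisely'' that $q\mapsto(d_q,D_q)$ preserves the bracket. The fourth and fifth do; the sixth is the additional identity $D_qD_{q'}=D_qd_{q'}$, which is not a consequence of bracket preservation --- if it were, the identity homomorphism on $\Bider(\lb{m})$ would force the evaluation rule to be an action with no hypothesis, contradicting your own abelian counterexample. Since you only invoke bracket preservation in the direction ``action gives homomorphism'' and handle the sixth axiom separately for the universal action, the proof itself is unaffected; only that sentence needs rewording.
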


	Bearing in mind the ease of the generalization of the actor in the category of groups and Lie algebras to crossed modules, together with the role of $\Bider(\lb{m})$ in regard to any Leibniz algebra $\lb{m}$, it makes sense to consider $(\Bider(\lb{q},\lb{n}),\Bider(\lb{n},\lb{q},\mu),\Delta)$ as a candidate for actor in $\XLb$, at least under certain conditions (see Proposition~\ref{prop_Lb_ann_perfect}). However, it would be reckless to define an action of a Leibniz crossed module $(\lb{m},\lb{p},\eta)$ on $(\lb{n},\lb{q},\mu)$ as a homomorphism from $(\lb{m},\lb{p},\eta)$ to the Leibniz crossed module $(\Bider(\lb{q},\lb{n}),\Bider(\lb{n},\lb{q},\mu),\Delta)$, since we cannot ensure that the mentioned homomorphism induces a set of actions of $(\lb{m},\lb{p},\eta)$ on $(\lb{n},\lb{q},\mu)$ from which we can construct the semidirect product.
	
	In \cite[Proposition 2.1]{CaInKhLa} the authors give an equivalent description of an action of a crossed module of groups in terms of equations. A similar description can be done for an action of a crossed module of Lie algebras (see \cite{CaCaKhLa}). This determines our approach to the problem. We consider a homomorphism from a Leibniz crossed module $(\lb{m},\lb{p},\eta)$ to $(\Bider(\lb{q},\lb{n}),\Bider(\lb{n},\lb{q},\mu),\Delta)$, which will be denoted by $\ol{\Act}(\lb{n},\lb{q},\mu)$ from now on, and unravel all the properties satisfied by the mentioned homomorphism, transforming them into a set of equations. Then we check that the existence of that set of equations is equivalent to the existence of a homomorphism of Leibniz crossed modules from $(\lb{m},\lb{p},\eta)$ to $\ol{\Act}(\lb{n},\lb{q},\mu)$ only under certain conditions. Finally we prove that those equations indeed describe a comprehensive set of actions by constructing the associated semidirect product, which is an object in $\XLb$.	
		\begin{lemma}\label{lemma_bider_combined}\hfill
			\begin{enumerate}
				\item [\rm (i)] Let $\lb{q}$ be a Leibniz algebra and $(\sigma,\theta), (\sigma',\theta') \in \Bider(\lb{q})$. If $\Ann(\lb{q}) = 0$ or $[\lb{q},\lb{q}] = \lb{q}$, then
				\begin{equation}\label{eq_ann_0_Lb}
				\theta \sigma' (q) = \theta \theta' (q),
				\end{equation}
				\noindent for all $q \in \lb{q}$.
				\item [\rm (ii)] Let $(\lb{n},\lb{q},\mu)$ be a Leibniz crossed module, $((\sigma_1,\theta_1),(\sigma_2,\theta_2)) \in \Bider(\lb{n},\lb{q},\mu)$ and $(d,D) \in \Bider(\lb{q},\lb{n})$. If $\Ann(\lb{n}) = 0$ or $[\lb{q},\lb{q}] = \lb{q}$, then
				\begin{align}
				D \sigma_2 (q) & = D \theta_2 (q), \label{eq_ann_0_XLb_1} \\
				\theta_1 d (q) & = \theta_1 D (q), \label{eq_ann_0_XLb_2}
				\end{align}
				\noindent for all $q \in \lb{q}$.
			\end{enumerate}
		\end{lemma}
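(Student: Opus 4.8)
The plan is to treat both parts by the same two-step strategy: first show, using only the bracket identities already available, that the relevant difference of composites lands in the annihilator (and, in part (ii), is also annihilated by the action of $\lb{q}$); then promote this to an honest equality under each of the two hypotheses separately. The point of the extra hypotheses is exactly to upgrade the "equal after bracketing" conclusions of Lemma~\ref{lemma_bider_qn} and Lemma~\ref{lemma_bider_nqmu} into "equal as elements".

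For part (i) I would set $F(q) = \theta\sigma'(q) - \theta\theta'(q)$. Applying Lemma~\ref{lemma_bider_qn} to the crossed module $(\lb{q},\lb{q},\id)$, for which $\Bider(\lb{q},\lb{q}) = \Bider(\lb{q})$ and equivariance and Peiffer are trivial, with $D_1 = \theta$, $d_2 = \sigma'$, $D_2 = \theta'$ and $\mu = \id$, yields $[F(q),q'] = 0 = [q',F(q)]$ for all $q,q' \in \lb{q}$, i.e. $F(q) \in \Ann(\lb{q})$ for every $q$. If $\Ann(\lb{q}) = 0$ this forces $F \equiv 0$. If instead $[\lb{q},\lb{q}] = \lb{q}$, then by linearity it suffices to evaluate $F$ on a bracket $q = [a,b]$: expanding $\sigma'([a,b])$ and $\theta'([a,b])$ by \eqref{eq_bider_1}--\eqref{eq_bider_2} and then applying $\theta$ via \eqref{eq_bider_2}, the two mixed terms cancel by \eqref{eq_bider_3} for $(\sigma',\theta')$, leaving $F([a,b]) = [F(a),b] - [F(b),a]$. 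Since $F(a),F(b) \in \Ann(\lb{q})$ by the first step, both brackets vanish, so $F([a,b]) = 0$ and hence $F \equiv 0$.

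Part (ii) follows the identical pattern, now inside $\lb{n}$. I would put $G(q) = D\sigma_2(q) - D\theta_2(q)$ and $H(q) = \theta_1 d(q) - \theta_1 D(q)$. The first two columns of Lemma~\ref{lemma_bider_nqmu} show that $G(q)$ and $H(q)$ lie in $\Ann(\lb{n})$ and are moreover annihilated on both sides by the $\lb{q}$-action. When $\Ann(\lb{n}) = 0$ this immediately gives $G \equiv 0$ and $H \equiv 0$. When $[\lb{q},\lb{q}] = \lb{q}$ I again reduce to $q = [a,b]$ and expand: for $G$ one uses \eqref{eq_bider_1}--\eqref{eq_bider_2} for $(\sigma_2,\theta_2)$ followed by \eqref{axiom_bider_qn_2} for $(d,D)$, the mixed terms cancelling by \eqref{axiom_bider_nqmu_8}; for $H$ one uses \eqref{axiom_bider_qn_1}--\eqref{axiom_bider_qn_2} for $(d,D)$ followed by \eqref{axiom_bider_nqmu_5}--\eqref{axiom_bider_nqmu_6} for $\theta_1$, the mixed terms cancelling by \eqref{axiom_bider_qn_3}. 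Each expansion collapses to $G([a,b]) = [G(a),b] - [G(b),a]$ and $H([a,b]) = [H(a),b] - [H(b),a]$, and these vanish because $G(a),G(b),H(a),H(b)$ are killed by the $\lb{q}$-action.

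The case-by-case first step is harmless; the crux is the perfect case, whose subtlety is a mild bootstrapping. One cannot conclude $F(q) = 0$ directly from $F(q) \in \Ann(\lb{q})$, since a perfect algebra may have nonzero annihilator; but evaluating on a generator $[a,b]$ expresses $F([a,b])$ entirely through $F(a)$ and $F(b)$ bracketed against $\lb{q}$, and these vanish \emph{precisely} because the image of $F$ already lies in the annihilator. I expect the only delicate bookkeeping to be matching each mixed term to the correct defining identity (\eqref{eq_bider_3}, \eqref{axiom_bider_nqmu_8}, or \eqref{axiom_bider_qn_3}) and keeping track of which bracket lives in $\lb{q}$, which in $\lb{n}$, and which arises from the action.
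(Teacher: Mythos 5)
Your proof is correct and takes essentially the same route as the paper: for (ii) the paper likewise gets annihilator membership from the second column of Lemma~\ref{lemma_bider_nqmu}, and in the perfect case expands $D\sigma_2([q,q'])$ and $D\theta_2([q,q'])$ (resp.\ $\theta_1 d$, $\theta_1 D$) on brackets, cancelling the mixed terms by \eqref{axiom_bider_nqmu_8} (resp.\ \eqref{axiom_bider_qn_3}) and equating the remaining terms by the first-column identities of Lemma~\ref{lemma_bider_nqmu} --- your formulation via the differences $G$ and $H$ is the same computation packaged as a single vanishing statement. For (i), where the paper only remarks that the calculations are straightforward, your specialization of Lemma~\ref{lemma_bider_qn} to the crossed module $(\lb{q},\lb{q},\id)$ is a clean way of carrying out exactly those calculations.
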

		\begin{proof}
			Calculations in order to prove (i) are straightforward. Regarding (ii), $D \sigma_2 (q) - D \theta_2 (q)$ and $\theta_1 d (q) - \theta_1 D (q)$ are elements in $\Ann(\lb{n})$, immediately from the identities in the second column from Lemma~\ref{lemma_bider_nqmu}. Therefore, if $\Ann(\lb{n}) = 0$, it is clear that \eqref{eq_ann_0_XLb_1} and \eqref{eq_ann_0_XLb_2} hold.
			
			Let us now assume that $[\lb{q},\lb{q}] = \lb{q}$. Given $q,q' \in \lb{q}$, directly from the fact that $(\sigma_2,\theta_2) \in \Bider(\lb{q})$ and $(d,D) \in \Bider(\lb{q},\lb{n})$, we get that
			\begin{align*}
			D \theta_2([q,q']) & = [D \theta_2(q),q'] - [D(q'),\theta_2(q)] - [D \theta_2 (q'),q] + [D (q),\theta_2 (q')], \\
			D \sigma_2([q,q']) & = [D \sigma_2(q),q'] - [D(q'),\sigma_2(q)] + [D(q),\sigma_2(q')] - [D \sigma_2(q'),q].
			\end{align*}
			 Due to \eqref{axiom_bider_nqmu_8} and the first identity in the first column from Lemma~\ref{lemma_bider_nqmu}, $D \theta_2([q,q']) = D \sigma_2([q,q'])$. By hypothesis, every element in $\lb{q}$ can be expressed as a linear combination of elements of the form $[q,q']$. This fact together with the linearity of $D$, $\sigma_2$ and $\theta_2$, guarantees that $D \theta_2(q) = D \sigma_2(q)$ for all $q \in \lb{q}$. The identity \eqref{eq_ann_0_XLb_2} can be checked similarly by making use of \eqref{axiom_bider_qn_3}, \eqref{axiom_bider_nqmu_5}, \eqref{axiom_bider_nqmu_6} and the third identity in the first column from Lemma~\ref{lemma_bider_nqmu}.
		\end{proof}

	\begin{theorem}\label{theo_equiv_XLb_action}
		Let $(\lb{m},\lb{p},\eta)$ and $(\lb{n},\lb{q},\mu)$ in $\XLb$. There exists a homomorphism of crossed modules from $(\lb{m},\lb{p},\eta)$ to $(\Bider(\lb{q},\lb{n}), \Bider(\lb{n},\lb{q},\mu), \Delta)$, if the following conditions hold:
		\begin{itemize}
			\item [\rm (i)] There are actions of the Leibniz algebra $\lb{p}$ (and so $\lb{m}$) on the Leibniz algebras $\lb{n}$ and $\lb{q}$. The homomorphism $\mu$
			is $\lb{p}$-equivariant, that is
			\begin{align}
			\mu([p,n]) & =[p,\mu(n)], \label{p_equivariant_Lb_1} \tag{LbEQ1} \\
			\mu([n,p]) & =[\mu(n),p], \label{p_equivariant_Lb_2} \tag{LbEQ2}
			\end{align}
			\noindent and the actions of $\lb{p}$ and $\lb{q}$ on $\lb{n}$ are compatible, that is
			\begin{align}
			[n,[p,q]] & = [[n,p],q]-[[n,q],p], \label{comp_of_actions_Lb_1} \tag{LbCOM1} \\
			[p,[n,q]] & = [[p,n],q]-[[p,q],n], \label{comp_of_actions_Lb_2} \tag{LbCOM2} \\
			[p,[q,n]] & = [[p,q],n]-[[p,n],q], \label{comp_of_actions_Lb_3} \tag{LbCOM3} \\
			[n,[q,p]] & = [[n,q],p]-[[n,p],q], \label{comp_of_actions_Lb_4} \tag{LbCOM4} \\
			[q,[n,p]] & = [[q,n],p]-[[q,p],n], \label{comp_of_actions_Lb_5} \tag{LbCOM5} \\
			[q,[p,n]] & = [[q,p],n]-[[q,n],p], \label{comp_of_actions_Lb_6} \tag{LbCOM6}
			\end{align}
			for all $n\in \lb{n}$, $p\in \lb{p}$ and $q\in \lb{q}$.		
			\item[\rm (i)] There are two $\k$-bilinear maps $\xi_1 \colon \lb{m}\times \lb{q}\to \lb{n}$ and $\xi_2 \colon \lb{q}\times \lb{m}\to \lb{n}$ such that
			\begin{align}
			\mu \xi_2(q,m) & =[q,m],\label{action_Lb_1a} \tag{LbM1a} \\
			\mu \xi_1(m,q) & =[m,q],\label{action_Lb_1b} \tag{LbM1b} \\
			\xi_2 (\mu(n),m) & = [n,m],\label{action_Lb_2a} \tag{LbM2a} \\
			\xi_1 (m,\mu(n)) & = [m,n],\label{action_Lb_2b} \tag{LbM2b} \\
			\xi_2 (q,[p,m]) & = \xi_2 ([q,p],m) - [\xi_2(q,m),p], \label{action_Lb_3a} \tag{LbM3a} \\
			\xi_1 ([p,m],q) & = \xi_2 ([p,q],m) - [p,\xi_2(q,m)], \label{action_Lb_3b} \tag{LbM3b} \\
			\xi_2 (q,[m,p]) & = [\xi_2(q,m),p] - \xi_2 ([q,p],m), \label{action_Lb_3c} \tag{LbM3c} \\
			\xi_1 ([m,p],q) & = [\xi_1(m,q),p] - \xi_1 (m,[q,p]), \label{action_Lb_3d} \tag{LbM3d} \\
			\xi_2 (q,[m,m']) & = [\xi_2(q,m),m'] - [\xi_2(q,m'),m], \label{action_Lb_4a} \tag{LbM4a} \\
			\xi_1 ([m,m'],q) & = [\xi_1(m,q),m'] - [m,\xi_2(q,m')], \label{action_Lb_4b} \tag{LbM4b} \\
			\xi_2 ([q,q'],m) & = [\xi_2(q,m),q'] + [q,\xi_2(q',m)], \label{action_Lb_5a} \tag{LbM5a} \\
			\xi_1 (m,[q,q']) & = [\xi_1(m,q),q'] - [\xi_1(m,q'),q], \label{action_Lb_5b} \tag{LbM5b} \\
			[q,\xi_1(m,q')] & = - [q,\xi_2(q',m)], \label{action_Lb_5c} \tag{LbM5c} \\
			\xi_1 (m,[p,q]) &= - \xi_1(m,[q,p]), \label{action_Lb_6a} \tag{LbM6a} \\
			[p,\xi_1(m,q)] & = - [p,\xi_2(q,m)], \label{action_Lb_6b} \tag{LbM6b}
			\end{align}
			\noindent for all $m,m'\in \lb{m}$, $n\in \lb{n}$, $p\in \lb{p}$, $q,q'\in \lb{q}$.
		\end{itemize}
		
		Additionally, the converse statement is also true if one of the following conditions holds:
		\begin{align}
		& \Ann(\lb{n})=0=\Ann(\lb{q}), \label{condition1} \tag{CON1} \\
		& \Ann(\lb{n})=0 \quad \text{and} \quad [\lb{q},\lb{q}] = \lb{q}, \label{condition2} \tag{CON2} \\
		& [\lb{n},\lb{n}] = \lb{n} \quad \text{and} \quad [\lb{q},\lb{q}] = \lb{q}. \label{condition3} \tag{CON3}
		\end{align}
	\end{theorem}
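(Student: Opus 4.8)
The plan is to set up an explicit dictionary between a homomorphism of crossed modules $(\varphi,\psi)\colon(\lb{m},\lb{p},\eta)\to(\Bider(\lb{q},\lb{n}),\Bider(\lb{n},\lb{q},\mu),\Delta)$ and the package consisting of the actions together with the bilinear maps $\xi_1,\xi_2$. Writing $\varphi(m)=(d_m,D_m)$ and $\psi(p)=((\sigma_1^p,\theta_1^p),(\sigma_2^p,\theta_2^p))$, the correspondence, with signs modelled on the inner biderivations recorded after Definition~\ref{def_bider_qn} and Definition~\ref{def_bider_nqmu}, reads $\theta_1^p(n)=[p,n]$, $\sigma_1^p(n)=-[n,p]$, $\theta_2^p(q)=[p,q]$, $\sigma_2^p(q)=-[q,p]$ for the $\lb{p}$-actions, and $D_m(q)=\xi_1(m,q)$, $d_m(q)=-\xi_2(q,m)$ for the mixed maps, the $\lb{m}$-actions being read off through $\eta$. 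I would first record this dictionary and check its consistency with the inner biderivation formulas.

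For the forward implication I would assume the actions and the maps $\xi_1,\xi_2$ with all the listed equations, define $\varphi,\psi$ by the above formulas, and verify piece by piece that $(\varphi,\psi)$ is a crossed-module homomorphism; no hypothesis on $\lb{n},\lb{q}$ is used here because every identity required is supplied outright. Concretely: membership $\varphi(m)\in\Bider(\lb{q},\lb{n})$ is \eqref{action_Lb_5a}, \eqref{action_Lb_5b}, \eqref{action_Lb_5c} rewritten as \eqref{axiom_bider_qn_1}--\eqref{axiom_bider_qn_3}; membership $\psi(p)\in\Bider(\lb{n},\lb{q},\mu)$ unfolds into \eqref{axiom_bider_nqmu_1}--\eqref{axiom_bider_nqmu_8}, where \eqref{axiom_bider_nqmu_1} follows from the Leibniz-action axioms of Definition~\ref{def_action_lb}, \eqref{axiom_bider_nqmu_2} is the equivariance \eqref{p_equivariant_Lb_1}--\eqref{p_equivariant_Lb_2}, and \eqref{axiom_bider_nqmu_3}--\eqref{axiom_bider_nqmu_8} are repackagings of \eqref{comp_of_actions_Lb_1}--\eqref{comp_of_actions_Lb_6}; that $\varphi,\psi$ are Leibniz homomorphisms follows from the action identities once the brackets \eqref{eq_bider_qn_bracket} and \eqref{eq_bider_nqmu_bracket} are inserted; commutativity $\Delta\varphi=\psi\eta$ is exactly \eqref{action_Lb_1a}, \eqref{action_Lb_1b}, \eqref{action_Lb_2a}, \eqref{action_Lb_2b}; and $\varphi([p,m])=[\psi(p),\varphi(m)]$, $\varphi([m,p])=[\varphi(m),\psi(p)]$ are obtained by matching \eqref{eq_action_Xbider_left}--\eqref{eq_action_Xbider_right} against \eqref{action_Lb_3a}--\eqref{action_Lb_4b}. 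All of this is bookkeeping once the dictionary is fixed.

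For the converse I would start from a homomorphism $(\varphi,\psi)$, read off the actions and the maps $\xi_1,\xi_2$ through the dictionary, and run the identifications backwards. The delicate point is that the extracted operators must form \emph{genuine} Leibniz actions and satisfy the full list of equations, whereas the membership axioms of $\Bider(\lb{n},\lb{q},\mu)$ and $\Bider(\lb{q},\lb{n})$ supply some of the needed identities only after bracketing. Most relations still come for free: \eqref{p_equivariant_Lb_1}--\eqref{p_equivariant_Lb_2} from \eqref{axiom_bider_nqmu_2}, the compatibilities \eqref{comp_of_actions_Lb_1}--\eqref{comp_of_actions_Lb_6} from \eqref{axiom_bider_nqmu_3}--\eqref{axiom_bider_nqmu_8}, the equations \eqref{action_Lb_1a}--\eqref{action_Lb_5b} from commutativity $\Delta\varphi=\psi\eta$, the action-respecting property and the homomorphism property of $\varphi$, and \eqref{action_Lb_5c} as nothing but \eqref{axiom_bider_qn_3}. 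The identities that are \emph{not} free are precisely those demanding an operator equality where only a bracketed one is available: the last Leibniz-action axiom for $\lb{p}$ on $\lb{n}$ reduces to $\theta_1^p\theta_1^{p'}=\theta_1^p\sigma_1^{p'}$ and its analogue for $\lb{p}$ on $\lb{q}$ to $\theta_2^p\theta_2^{p'}=\theta_2^p\sigma_2^{p'}$, while \eqref{action_Lb_6a}, \eqref{action_Lb_6b} reduce through the dictionary to $D_m\theta_2^p=D_m\sigma_2^p$ and $\theta_1^p d_m=\theta_1^p D_m$.

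The main obstacle is exactly this promotion from bracketed to operator identities, and it is what Lemma~\ref{lemma_bider_combined} is built to perform: part (i) upgrades $\theta\theta'=\theta\sigma'$ for pairs of biderivations of $\lb{n}$ and of $\lb{q}$, and part (ii) upgrades $D\theta_2=D\sigma_2$ and $\theta_1 d=\theta_1 D$. The role of \eqref{condition1}, \eqref{condition2}, \eqref{condition3} is then transparent: each is tailored so that Lemma~\ref{lemma_bider_combined}(i) applies simultaneously to $\lb{n}$ (via $\Ann(\lb{n})=0$ or $[\lb{n},\lb{n}]=\lb{n}$) and to $\lb{q}$ (via $\Ann(\lb{q})=0$ or $[\lb{q},\lb{q}]=\lb{q}$), and Lemma~\ref{lemma_bider_combined}(ii) applies (via $\Ann(\lb{n})=0$ or $[\lb{q},\lb{q}]=\lb{q}$). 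Once these three promotions are granted, the remaining equations of the converse follow by the same matching as in the forward direction, and the proof closes. I would therefore expect the bulk of the writing effort to be the routine forward verification, with the genuine mathematical content concentrated in the bracketed-to-operator step governed by Lemma~\ref{lemma_bider_combined}.
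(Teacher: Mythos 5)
Your proposal is correct and takes essentially the same approach as the paper's own proof: the same dictionary with the same sign conventions ($d_m(q)=-\xi_2(q,m)$, $D_m(q)=\xi_1(m,q)$, $\sigma^p_1(n)=-[n,p]$, $\theta^p_1(n)=[p,n]$, etc.), the same routine matching in the forward direction, and the same identification, in the converse, of the sixth action axiom and of \eqref{action_Lb_6a}--\eqref{action_Lb_6b} as the only non-free identities, promoted from bracketed to operator form via Lemma~\ref{lemma_bider_combined} under \eqref{condition1}--\eqref{condition3}. There is nothing substantive to correct.
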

		\begin{proof}
			Let us suppose that (i) and (ii) hold. It is possible to define a homomorphism of crossed modules $(\vp,\psi)$ from $(\lb{m},\lb{p},\eta)$ to $\ol{\Act}(\lb{n},\lb{q},\mu)$ as follows. Given $m \in \lb{m}$, $\vp(m)=(d_{m},D_{m})$, with
			\begin{equation*}
			d_{m} (q) = - \xi_{2}(q,m), \qquad D_{m} (q) = \xi_{1}(m,q),
			\end{equation*}
			\noindent for all $q \in \lb{q}$. On the other hand, for any $p \in \lb{p}$, $\psi(p)=((\sigma^{p}_1,\theta^{p}_1), (\sigma^{p}_2,\theta^{p}_2))$, with
			\begin{align*}
			\sigma^{p}_1(n) & = - [n,p],  \qquad \theta^{p}_1(n) = [p,n], \\
			\sigma^{p}_2(q) & = - [q,p],  \qquad \theta^{p}_2(q) = [p,q],
			\end{align*}
			\noindent for all $n \in \lb{n}$, $q \in \lb{q}$.
			It follows directly from \eqref{action_Lb_5a}--\eqref{action_Lb_5c} that $(d_{m},D_{m}) \in \Bider(\lb{q},\lb{n})$ for all $m \in \lb{m}$. Besides, $\vp$ is clearly $\k$-linear and given $m,m' \in \lb{m}$,
			\begin{equation*}
			[\vp(m),\vp(m')] = [(d_{m},D_{m}),(d_{m'},D_{m'})] = [d_{m} \mu d_{m'} - d_{m'} \mu d_{m}, D_{m} \mu d_{m'} - d_{m'} \mu D_{m}].
			\end{equation*}
			\noindent For any $q \in \lb{q}$,
			\begin{align*}
			d_{m} \mu d_{m'} (q) - d_{m'} \mu d_{m} (q) & = -\xi_{2} (\mu d_{m'} (q), m) + \xi_2 (\mu d_{m} (q), m') \\
			& = - [d_{m'} (q),m] + [d_{m}(q),m'] \\ & = [\xi_{2} (q,m'),m] - [\xi_{2}(q,m), m'] \\
			& = -\xi_{2} (q,[m,m']) = d_{[m,m']}(q),
			\end{align*}
			\noindent due to \eqref{action_Lb_2a} and \eqref{action_Lb_4a}. Analogously, it can be easily checked the identity $(D_{m} \mu d_{m'} - d_{m'} \mu D_{m}) (q) = D_{[m,m']} (q)$ by making use of \eqref{action_Lb_2a}, \eqref{action_Lb_2b} and \eqref{action_Lb_4b}. Hence, $\vp$ is a homomorphism of Leibniz algebras.
			
			As for $\psi$, it is necessary to prove that $((\sigma^{p}_1,\theta^{p}_1), (\sigma^{p}_2,\theta^{p}_2))$ satisfies all the axioms from Definition~\ref{def_bider_nqmu} for any $p \in \lb{p}$.  The fact that $(\sigma^{p}_1,\theta^{p}_1)$ (respectively $(\sigma^{p}_2,\theta^{p}_2)$) is a biderivation of $\lb{n}$ (respectively $\lb{q}$) follows directly from the actions of $\lb{p}$ on $\lb{n}$ and $\lb{q}$. The identities $\mu \theta^{p}_1 = \theta^{p}_2 \mu$ and $\mu \sigma^{p}_1 = \sigma^{p}_2 \mu$ are immediate consequences of \eqref{p_equivariant_Lb_1} and \eqref{p_equivariant_Lb_2} respectively.
			
			Observe that the combinations of the identities \eqref{comp_of_actions_Lb_1} and \eqref{comp_of_actions_Lb_4} and the identities \eqref{comp_of_actions_Lb_5} and \eqref{comp_of_actions_Lb_6} yield the equalities
			\begin{equation*}
			-[n,[q,p]] = [n,[p,q]] \qquad \text{and} \qquad -[q,[n,p]] = [q,[p,n]].
			\end{equation*}
			\noindent These together with \eqref{comp_of_actions_Lb_2}--\eqref{comp_of_actions_Lb_5} allow us to prove that $((\sigma^{p}_1,\theta^{p}_1), (\sigma^{p}_2,\theta^{p}_2))$ does satisfy conditions \eqref{axiom_bider_nqmu_3}--\eqref{axiom_bider_nqmu_8} from Definition~\ref{def_bider_nqmu}. Therefore, $\psi$ is well defined, while it is obviously $\k$-linear. Moreover, due to \eqref{eq_bider_nqmu_bracket} we know that
			\begin{equation*}
			[\psi(p),\psi(p')] = ((\sigma^{p}_1 \sigma^{p'}_1 - \sigma^{p'}_1 \sigma^{p}_1, \theta^{p}_1 \sigma^{p'}_1 - \sigma^{p'}_1 \theta^{p}_1),(\sigma^{p}_2 \sigma^{p'}_2 - \sigma^{p'}_2 \sigma^{p}_2, \theta^{p}_2 \sigma^{p'}_2 - \sigma^{p'}_2 \theta^{p}_2)),
			\end{equation*}
			\noindent and by definition
			\begin{equation*}
			\psi([p,p']) = ((\sigma^{[p,p']}_1,\theta^{[p,p']}_1), (\sigma^{[p,p']}_2,\theta^{[p,p']}_2)).
			\end{equation*}
			\noindent One can easily check that the corresponding components are equal by making use of the actions of $\lb{p}$ on $\lb{n}$ and $\lb{q}$. Hence, $\psi$ is a homomorphism of Leibniz algebras.
			
			Recall that
			\begin{align*}
			\Delta \vp (m) & = ((d_{m} \mu, D_{m} \mu),(\mu d_{m}, \mu D_{m})), \\
			\psi\eta (m) & = ((\sigma^{\eta(m)}_1,\theta^{\eta(m)}_1),(\sigma^{\eta(m)}_2,\theta^{\eta(m)}_2)),
			\end{align*}
			\noindent for any $m \in \lb{m}$, but
			\begin{align*}
			d_m \mu (n) & = -\xi_2 (\mu(n),m) = -[n,m] = -[n,\eta(m)] = \sigma^{\eta(m)}_{1} (n),\\
			D_m \mu (n) & = \xi_1 (m,\mu(n)) = [m,n] = [\eta(m),n] = \theta^{\eta(m)}_{1} (n),\\
			\mu d_m (q) & = -\mu \xi_2(q,m) = - [q,m] = - [q,\eta(m)] = \sigma^{\eta(m)}_{2} (q), \\
			\mu D_m (q) & = \mu \xi_1(m,q) = [m,q] = [\eta(m),q] = \theta^{\eta(m)}_{2} (q),
			\end{align*}
			\noindent for all $n \in \lb{n}$, $q \in \lb{q}$, due to \eqref{action_Lb_1a}, \eqref{action_Lb_1b}, \eqref{action_Lb_2a}, \eqref{action_Lb_2b}. Therefore, $\Delta \vp = \psi \eta$.
			
			It only remains to check the behaviour of $(\vp,\psi)$ regarding the action of $\lb{p}$ on $\lb{m}$. Let $m \in \lb{m}$ and $p \in \lb{p}$. Due to \eqref{eq_action_Xbider_left} and \eqref{eq_action_Xbider_right},
			\begin{align*}
			[\psi(p),\vp(m)] & = (\sigma^{p}_1 d_{m} - d_{m} \sigma^{p}_2, \theta^{p}_1 d_{m} - d_{m} \theta^{p}_2), \\
			[\vp(m),\psi(p)] & = (d_{m} \sigma^{p}_2 - \sigma^{p}_1 d_{m}, D_{m} \sigma^{p}_2 - \sigma^{p}_1 D_{m}).
			\end{align*}
			On the other hand, by definition, we know that
			\begin{align*}
			\vp([p,m]) & = (d_{[p,m]},D_{[p,m]}), \\
			\vp([m,p]) & = (d_{[m,p]},D_{[m,p]}).
			\end{align*}
			\noindent Directly from \eqref{action_Lb_3a}, \eqref{action_Lb_3b}, \eqref{action_Lb_3c} and \eqref{action_Lb_3d} one can easily confirm that the required identities between components hold. Hence, we can finally ensure that $(\vp,\psi)$ is a homomorphism of Leibniz crossed modules.
			
			Now let us show that it is necessary that at least one of the conditions \eqref{condition1}--\eqref{condition3} holds in order to prove the converse statement. Let us suppose that there is a homomorphism of crossed modules
			\begin{equation}\label{action_diagramLb}
			\xymatrix {
				\lb{m} \ar[d]_{\vp}\ar[r]^{\eta} & \lb{p} \ar[d]^{\psi}  \\
				\Bider(\lb{q},\lb{n}) \ar[r]_{\Delta} & \Bider(\lb{n},\lb{q},\mu)
			}
			\end{equation}
			Given $m \in \lb{m}$ and $p \in \lb{p}$, let us denote $\vp(m)$ by $(d_{m},D_{m})$ and $\psi(p)$ by $((\sigma^{p}_1,\theta^{p}_1),(\sigma^{p}_2,\theta^{p}_2))$, which satisfy conditions \eqref{axiom_bider_qn_1}--\eqref{axiom_bider_qn_3} from Definition~\ref{def_bider_qn} and conditions \eqref{axiom_bider_nqmu_1}--\eqref{axiom_bider_nqmu_8} from Definition~\ref{def_bider_nqmu} respectively. Also, due to the definition of $\Delta$ (see Proposition~\ref{prop_delta_lb_morph}), the commutativity of \eqref{action_diagramLb} can be expressed by the identity
			\begin{equation}\label{commutativity_action_Lb}
			((d_{m} \mu,D_{m} \mu),(\mu d_{m},\mu D_{m})) = ((\sigma^{\eta(m)}_1,\theta^{\eta(m)}_1),(\sigma^{\eta(m)}_2,\theta^{\eta(m)}_2)),
			\end{equation}
			\noindent for all $m \in \lb{m}$.
			It is possible to define four bilinear maps, all of them denoted by $[-,-]$, from $\lb{p} \times \lb{n}$ to $\lb{n}$, $\lb{n} \times \lb{p}$ to $\lb{n}$, $\lb{p} \times \lb{q}$ to $\lb{q}$ and $\lb{q} \times \lb{p}$ to $\lb{q}$, given by
			\begin{align*}
			[p,n] & = \theta^{p}_1(n),  \qquad  [n,p] = -\sigma^{p}_1(n), \\
			[p,q] & = \theta^{p}_2(q),  \qquad  [q,p] = -\sigma^{p}_2(q),
			\end{align*}
			for all $n\in \lb{n}$, $p \in \lb{p}$, $q \in \lb{q}$. These maps define actions of $\lb{p}$ on $\lb{n}$ and $\lb{q}$. The first three identities for the action on $\lb{n}$ (respectively $\lb{q}$) follow easily from the fact that $(\sigma^{p}_1,\theta^{p}_1)$ (respectively $(\sigma^{p}_2,\theta^{p}_2)$) is a biderivation of $\lb{n}$ (respectively $\lb{q}$).
			
			Since $\psi$ is a Leibniz homomorphism, we get that
			\begin{multline*}
			((\sigma^{[p,p']}_1,\theta^{[p,p']}_1),(\sigma^{[p,p']}_2,\theta^{[p,p']}_2)) = ((\sigma^{p}_1 \sigma^{p'}_1 - \sigma^{p'}_1 \sigma^{p}_1, \theta^{p}_1 \sigma^{p'}_1 - \sigma^{p'}_1 \theta^{p}_1),\\(\sigma^{p}_2 \sigma^{p'}_2 - \sigma^{p'}_2 \sigma^{p}_2, \theta^{p}_2 \sigma^{p'}_2 - \sigma^{p'}_2 \theta^{p}_2)).
			\end{multline*}
			The identities between the first and the second (respectively the third and the fourth) components in those quadruples allow us to confirm the fourth and fifth identities for the action of $\lb{p}$ on $\lb{n}$ (respectively $\lb{q}$).
			
			As for the last condition for both actions, it is fairly straightforward to check that
			\begin{align*}
			[[p,p'],n] - [[p,n],p'] & = \theta^{p}_1 \sigma^{p'}_1 (n),\\
			[[p,p'],q] - [[p,q],p'] & = \theta^{p}_2 \sigma^{p'}_2 (q),
			\end{align*}
			\noindent while
			\begin{align*}
			[p,[p',n]] & = \theta^{p}_1 \theta^{p'}_1 (n),\\
			[p,[p',q]] & = \theta^{p}_2 \theta^{p'}_2 (q),
			\end{align*}
			\noindent for all $n \in \lb{n}$, $p,p' \in \lb{p}$, $q \in \lb{q}$. However, if at least one of the conditions \eqref{condition1}--\eqref{condition3} holds, due to Lemma~\ref{lemma_bider_combined}~(i), $\theta^{p}_1 \sigma^{p'}_1 (n)=\theta^{p}_1 \theta^{p'}_1 (n)$ and $\theta^{p}_2 \sigma^{p'}_2 (q)=\theta^{p}_2 \theta^{p'}_2 (q)$. Therefore, we can ensure that there are Leibniz actions of $\lb{p}$ on both $\lb{n}$ and $\lb{q}$, which induce actions of $\lb{m}$ on $\lb{n}$ and $\lb{q}$ via $\eta$.
			
			The reader might have noticed that a fourth possible condition on $(\lb{n},\lb{q},\mu)$ could have been considered in order to guarantee the existence of the actions of $\lb{p}$ on $\lb{n}$ and $\lb{q}$ from the existence of the homomorphism of Leibniz crossed modules $(\vp,\psi)$. In fact, if $[\lb{n},\lb{n}] = \lb{n}$ and $\Ann(\lb{q}) = 0$, the problem with the last condition for the actions could have been solved in the same way. Nevertheless, this fourth condition does not guarantee that (ii) holds, as we will prove immediately below.
			
			Regarding \eqref{p_equivariant_Lb_1} and \eqref{p_equivariant_Lb_2}, they follow directly from \eqref{axiom_bider_nqmu_2} (observe that, by hypothesis, $((\sigma^{p}_1,\theta^{p}_1),(\sigma^{p}_2,\theta^{p}_2))$ is a biderivation of $(\lb{n},\lb{q},\mu)$ for any $p \in \lb{p}$). Similarly, \eqref{comp_of_actions_Lb_1}--\eqref{comp_of_actions_Lb_6} follow almost immediately from \eqref{axiom_bider_nqmu_3}--\eqref{axiom_bider_nqmu_8}. Hence, (i) holds.
			
			Concerning (ii), we can define $\xi_1(m,q)=D_{m}(q)$ and $\xi_2(q,m)=-d_{m}(q)$ for any $m \in \lb{m}$, $q \in \lb{q}$. In this way, $\xi_1$ and $\xi_2$ are clearly bilinear. \eqref{action_Lb_1a}, \eqref{action_Lb_1b}, \eqref{action_Lb_2a} and \eqref{action_Lb_2b} follow immediately from the identity \eqref{commutativity_action_Lb} and the fact that the actions of $\lb{m}$ on $\lb{n}$ and $\lb{q}$ are induced by the actions of $\lb{p}$ via $\eta$.
			
			Identities \eqref{action_Lb_5a}, \eqref{action_Lb_5b} and \eqref{action_Lb_5c} are a direct consequence of \eqref{axiom_bider_qn_1}--\eqref{axiom_bider_qn_3} (recall that, by hypothesis, $(d_{m},D_{m})$ is a biderivation from $\lb{q}$ to $\lb{n}$ for any $m \in \lb{m}$).
			
			Since $\vp$ is a Leibniz homomorphism, we have that
			\begin{equation*}
			(d_{[m,m']}, D_{[m,m']}) = (d_{m} \mu d_{m'} - d_{m'} \mu d_{m}, D_{m} \mu d_{m'} - d_{m'} \mu D_{m}).
			\end{equation*}
			This identity, together with \eqref{action_Lb_2a} and \eqref{action_Lb_2b}, allows to easily prove that \eqref{action_Lb_4a} and \eqref{action_Lb_4b} hold.
			
			Note that, since $(\vp,\psi)$ is a homomorphism of Leibniz crossed modules, $\vp([p,m]) = [\psi(p),\vp(m)]$ and $\vp([m,p]) = [\vp(m),\psi(p)]$ for all $m \in \lb{m}$, $p \in \lb{p}$. Due to the definition of the action of $\Bider(\lb{n},\lb{q},\mu)$ on $\Bider(\lb{q},\lb{n})$ (see Theorem~\ref{theo_action_bider_nqmu}), we can write
			\begin{align*}
			(d_{[p,m]}, D_{[p,m]}) & = (\sigma^{p}_1 d_{m} - d_{m} \sigma^{p}_2, \theta^{p}_1 d_{m} - d_{m} \theta^{p}_2), \\
			(d_{[m,p]}, D_{[m,p]}) & = (d_{m} \sigma^{p}_2 - \sigma^{p}_1 d_{m}, D_{m} \sigma^{p}_2 - \sigma^{p}_1 D_{m}).
			\end{align*}
			Identities \eqref{action_Lb_3a}, \eqref{action_Lb_3b}, \eqref{action_Lb_3c} and \eqref{action_Lb_3d} follow immediately from the previous identities.
			
			Regarding \eqref{action_Lb_6a} and \eqref{action_Lb_6b}, directly from the definition of $\xi_1$, $\xi_2$ and the actions of $\lb{p}$ on $\lb{n}$ and $\lb{q}$, we have that
			\begin{align*}
			\xi_1(m,[p,q]) & = D_{m} \theta^{p}_2 (q), \qquad [p,\xi_1(m,q)] = \theta^{p}_1 D_{m} (q), \\
			- \xi_1(m,[q,p]) & = D_{m} \sigma^{p}_2 (q), \qquad - [p,\xi_2(q,m)] = \theta^{p}_1 d_{m} (q),
			\end{align*}
			\noindent for all $m \in \lb{m}$, $p\in \lb{p}$, $q \in \lb{q}$. Nevertheless, if at least one of the conditions \eqref{condition1}--\eqref{condition3} holds, due to Lemma~\ref{lemma_bider_combined}~(ii), $D_{m} \theta^{p}_2 (q) = D_{m} \sigma^{p}_2 (q)$ and $\theta^{p}_1 D_{m} (q) = \theta^{p}_1 d_{m} (q)$. Hence, (ii) holds.
		\end{proof}
		\begin{remark}
			A closer look at the proof of the previous theorem shows that neither conditions \eqref{action_Lb_6a} and \eqref{action_Lb_6b}, nor the identities $[p,[p',n]] = [[p,p'],n]-[[p,n],p']$ and $[p,[p',q]] = [[p,p'],q]-[[p,q],p']$ (which correspond to the sixth axiom satisfied by the actions of $\lb{p}$ on $\lb{n}$ and $\lb{q}$ respectively) are necessary in order to prove the existence of a homomorphism of crossed modules $(\vp,\psi)$ from $(\lb{m},\lb{p},\eta)$ to $\ol{\Act}(\lb{n},\lb{q},\mu)$, under the hypothesis that (i) and (ii) hold. Actually, if we remove those conditions from (i) and (ii), the converse statement would be true for any Leibniz crossed module $(\lb{n},\lb{q},\mu)$, even if it does not satisfy any of the conditions \eqref{condition1}--\eqref{condition3}. The problem is that \eqref{action_Lb_6a} and \eqref{action_Lb_6b}, together with the sixth identity satisfied by the actions of $\lb{p}$ on $\lb{n}$ and $\lb{q}$ are essential in order to prove that (i) and (ii) as in Theorem~\ref{theo_equiv_XLb_action} describe a set of actions of $(\lb{m},\lb{p},\eta)$ on $(\lb{n},\lb{q},\mu)$, as we will show immediately below. This agrees with the idea of $\ol{\Act}(\lb{n},\lb{q},\mu)$ not being ``good enough'' to be the actor of $(\lb{n},\lb{q},\mu)$ in general, just as $\Bider(\lb{m})$ is not always the actor of a Leibniz algebra $\lb{m}$.
		\end{remark}
		\begin{example}\label{canonical_morph}
			Let $(\lb{m},\lb{p},\eta) \in \XLb$, there is a homomorphism $(\vp,\psi) \colon (\lb{m},\lb{p},\eta) \to \ol{\Act}(\lb{m},\lb{p},\eta)$, with $\vp(m) = (d_{m},D_{m})$ and $\psi(p) = ((\sigma^{p}_1,\theta^{p}_1), (\sigma^{p}_2,\theta^{p}_2))$, where
			\begin{equation*}
			d_{m} (p) = - [p,m], \qquad D_{m} (p) = [m,p],
			\end{equation*}
			\noindent and
			\begin{align*}
			\sigma^{p}_1(m) & = - [m,p],  \qquad \theta^{p}_1(m) = [p,m], \\
			\sigma^{p}_2(p') & = - [p',p],  \qquad \theta^{p}_2(p') = [p,p'],
			\end{align*}
			\noindent for all $m \in \lb{m}$, $p,p' \in \lb{p}$. Calculations in order to prove that $(\vp,\psi)$ is indeed a homomorphism of Leibniz crossed modules are fairly straightforward. Of course, this homomorphism does not necessarily define a set of actions from which it is possible to construct the semidirect product. Theorem~\ref{theo_equiv_XLb_action}, along with the result immediately bellow, shows that if $(\lb{m},\lb{p},\eta)$ satisfies at least one of the conditions \eqref{condition1}--\eqref{condition3}, then the previous homomorphism does define an appropriate set of actions of $(\lb{m},\lb{p},\eta)$ on itself.
		\end{example}
		Let $(\lb{m},\lb{p},\eta)$ and $(\lb{n},\lb{q},\mu)$ be Leibniz crossed modules such that (i) and (ii) from Theorem~\ref{theo_equiv_XLb_action} hold. Therefore, there are Leibniz actions of $\lb{m}$ on $\lb{n}$ and of $\lb{p}$ on $\lb{q}$, so it makes sense to consider the semidirect products of Leibniz algebras $\lb{n} \rtimes \lb{m}$ and $\lb{q} \rtimes \lb{p}$. Furthermore, we have the following result.
		\begin{theorem}\label{theo_semi_XLb}
			There is an action of the Leibniz algebra $\lb{q}\rtimes \lb{p}$ on the Leibniz algebra $\lb{n}\rtimes \lb{m}$, given by
			\begin{align}
			[(q,p),(n,m)] = ([q,n] + [p,n] + \xi_2(q,m),[p,m]), \label{action_semidirect_XLb_left} \\
			[(n,m),(q,p)] = ([n,q] + [n,p] + \xi_1(m,q),[m,p]), \label{action_semidirect_XLb_right}
			\end{align}
			\noindent for all $(q,p) \in \lb{q} \rtimes \lb{p}$, $(n,m) \in \lb{n} \rtimes \lb{m}$, with $\xi_1$ and $\xi_2$ as in Theorem~\ref{theo_equiv_XLb_action}. Moreover, the Leibniz homomorphism $(\mu,\eta) \colon \lb{n}\rtimes \lb{m}\to \lb{q}\rtimes \lb{p}$, given by
			\[
			(\mu,\eta)(n,m)=(\mu(n),\eta(m)),
			\]
			\noindent for all $(n,m)\in \lb{n} \rtimes \lb{m}$, together with the previous action, is a Leibniz crossed module.
		\end{theorem}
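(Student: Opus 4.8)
The plan is to establish the two claims in turn: first, that the bilinear maps \eqref{action_semidirect_XLb_left} and \eqref{action_semidirect_XLb_right} satisfy the six identities of Definition~\ref{def_action_lb}, so that they constitute a Leibniz action of $\lb{q}\rtimes\lb{p}$ on $\lb{n}\rtimes\lb{m}$; and second, that $(\mu,\eta)$ together with this action verifies the defining conditions of a Leibniz crossed module, namely that $(\mu,\eta)$ is a Leibniz homomorphism and that the equivariance \eqref{XLb1} and the Peiffer identity \eqref{XLb2} hold. The two given crossed modules $(\lb{m},\lb{p},\eta)$ and $(\lb{n},\lb{q},\mu)$, the conditions (i) and (ii) of Theorem~\ref{theo_equiv_XLb_action}, and the maps $\xi_1,\xi_2$ are the only ingredients available, so every step must be traced back to them.

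For the action, I would substitute generic elements $(q,p),(q',p')\in\lb{q}\rtimes\lb{p}$ and $(n,m),(n',m')\in\lb{n}\rtimes\lb{m}$ into each of the six axioms, expand both sides with the two semidirect-product brackets and the formulas \eqref{action_semidirect_XLb_left}--\eqref{action_semidirect_XLb_right}, and compare the $\lb{m}$- and $\lb{n}$-components separately. The $\lb{m}$-components collapse in every case to the corresponding axiom of Definition~\ref{def_action_lb} for the action of $\lb{p}$ on $\lb{m}$, which holds since $(\lb{m},\lb{p},\eta)$ is a Leibniz crossed module; all the content therefore sits in the $\lb{n}$-components. Each such component is a sum of iterated brackets of the $\lb{q}$- and $\lb{p}$-actions on $\lb{n}$, of terms carrying a single $\xi_1$ or $\xi_2$, and of terms where $\xi_1$ or $\xi_2$ is applied to a bracket. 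I would reduce them using, in turn, the action axioms of $\lb{q}$ and of $\lb{p}$ on $\lb{n}$; the compatibility conditions \eqref{comp_of_actions_Lb_1}--\eqref{comp_of_actions_Lb_6}, invoked whenever the induced action $[m,n]=[\eta(m),n]$ of $\lb{m}$ on $\lb{n}$ intervenes and the $\lb{q}$- and $\lb{p}$-actions must be interchanged; the identities \eqref{action_Lb_3a}--\eqref{action_Lb_6b} to resolve the $\xi_i$-terms; and, for terms such as $[\mu\xi_2(q,m),n]$, the identities \eqref{action_Lb_1a}--\eqref{action_Lb_1b} followed by the Peiffer identity of $(\lb{n},\lb{q},\mu)$. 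The bookkeeping is long but mechanical, so I would carry out one representative axiom in full and note that the others are analogous.

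The main obstacle is the group of axioms in which a bracket $[(q,p),(q',p')]$ occurs, i.e.\ the fourth, fifth and sixth identities of Definition~\ref{def_action_lb}. Expanding these produces the ``mixed'' contributions $\xi_1(m,[p,q])$, $\xi_1(m,[q,p])$, $[p,\xi_1(m,q)]$ and $[p,\xi_2(q,m)]$, together with nested brackets $[p,[p',n]]$ and $[p,[p',q]]$; reconciling the two sides then forces the use of \eqref{action_Lb_6a}, \eqref{action_Lb_6b} and of the sixth axioms of the actions of $\lb{p}$ on $\lb{n}$ and on $\lb{q}$ (both available from condition (i)). This is precisely the phenomenon highlighted in the remark preceding Example~\ref{canonical_morph}: these identities are dispensable for the mere existence of the homomorphism $(\vp,\psi)$, but they are indispensable here, where one must certify that conditions (i) and (ii) genuinely encode a full set of actions from which the semidirect product can be built.

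Finally, for the crossed-module structure I would proceed as follows. That $(\mu,\eta)$ is a Leibniz homomorphism $\lb{n}\rtimes\lb{m}\to\lb{q}\rtimes\lb{p}$ follows from $\mu$ and $\eta$ being homomorphisms together with the $\lb{p}$-equivariance \eqref{p_equivariant_Lb_1}--\eqref{p_equivariant_Lb_2} of $\mu$. For the equivariance \eqref{XLb1}, I would compare the $\lb{q}$- and $\lb{p}$-components of $(\mu,\eta)[(q,p),(n,m)]$ with the bracket $[(q,p),(\mu(n),\eta(m))]$ computed in $\lb{q}\rtimes\lb{p}$; this reduces to the equivariance of the two given crossed modules, to \eqref{p_equivariant_Lb_1}--\eqref{p_equivariant_Lb_2} and to \eqref{action_Lb_1a}--\eqref{action_Lb_1b}, with the symmetric case treated identically. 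For the Peiffer identity \eqref{XLb2}, comparing $[(\mu(n),\eta(m)),(n',m')]$ with the bracket $[(n,m),(n',m')]$ in $\lb{n}\rtimes\lb{m}$ reduces everything to the Peiffer identities of $(\lb{n},\lb{q},\mu)$ and $(\lb{m},\lb{p},\eta)$ together with \eqref{action_Lb_2a}--\eqref{action_Lb_2b}. These three verifications are short in comparison with the action axioms, so the bulk of the proof — and its only genuine difficulty — lies in the $\lb{n}$-components of the action.
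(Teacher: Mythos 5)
Your proposal is correct and follows essentially the same route as the paper's own proof: verify the six axioms of Definition~\ref{def_action_lb} componentwise (carrying out one representative identity in full), reducing the $\lb{n}$-components via the Peiffer identity of $\mu$, \eqref{action_Lb_1a}--\eqref{action_Lb_2b}, the compatibility conditions, and the $\xi$-identities — with \eqref{action_Lb_6a}--\eqref{action_Lb_6b} and the sixth action axioms playing exactly the indispensable role you single out — and then check the homomorphism, equivariance and Peiffer conditions for $(\mu,\eta)$ from the corresponding properties of $\mu$, $\eta$ and \eqref{p_equivariant_Lb_1}--\eqref{p_equivariant_Lb_2}.
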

			\begin{proof}
				Identities \eqref{action_semidirect_XLb_left} and \eqref{action_semidirect_XLb_right} follow easily from the conditions satisfied by $(\lb{m},\lb{p},\eta)$ and $(\lb{n},\lb{q},\mu)$ (see Theorem~\ref{theo_equiv_XLb_action}). Nevertheless, as an example, we show how to prove the third one. Calculations for the rest of the identities are similar. Let $(n,m),(n',m') \in \lb{n} \rtimes \lb{m}$ and $(q,p) \in \lb{q} \rtimes \lb{p}$. By routine calculations we get that
				\begin{multline*}
				[(n,m),[(n',m'),(q,p)]] = (\underbrace{[n,[n',q]]}_{(1)} \underbrace{+ [n,[n',p]]}_{(2)} \underbrace{+ [n,\xi_1(m',q)]}_{(3)} \underbrace{+ [m,[n',q]]}_{(4)} \\ \underbrace{+ [m,[n',p]]}_{(5)} \underbrace{+ [m,\xi_1(m',q)]}_{(6)} \underbrace{+ [n,[m',p]]}_{(7)}, \underbrace{[m,[m',p]]}_{(8)}),\\
				[[(n,m),(n',m')],(q,p)] = (\underbrace{[[n,n'],q]}_{(1')} \underbrace{+ [[n,n'],p]}_{(2')} \underbrace{+ [[n,m'],q]}_{(3')} \underbrace{+ [[m,n'],q]}_{(4')} \\ \underbrace{+ [[m,n'],p]}_{(5')} \underbrace{+ \xi_1([m,m'],q)}_{(6')} \underbrace{+ [[n,m'],p]}_{(7')}, \underbrace{[[m,m'],p]}_{(8')}),\\
				[[(n,m),(q,p)],(n',m')] = (\underbrace{[[n,q],n']}_{(1'')} \underbrace{+ [[n,p],n']}_{(2'')} \underbrace{+ [[n,q],m']}_{(3'')} \underbrace{+ [\xi_1(m,q),n']}_{(4'')} \\ \underbrace{+ [[m,p],n']}_{(5'')} \underbrace{+ [\xi_1(m,q),m']}_{(6'')} \underbrace{+ [[n,p],m']}_{(7'')}, \underbrace{[[m,p],m']}_{(8'')}).
				\end{multline*}
				Let us show that $(i) = (i') - (i'')$ for $i=1,\dots,8$. It is immediate for $i=1,2,8$ due to the action of $\lb{q}$ on $\lb{n}$ and the actions of $\lb{p}$ on $\lb{n}$ and $\lb{m}$. For $i=5$, the identity follows from the fact that the action of $\lb{m}$ on $\lb{n}$ is defined via $\eta$ together with the equivariance of $\eta$. Namely,
				\begin{align*}
				[m,[n',p]] & = [\eta(m),[n',p]] = [[\eta(m),n'],p] - [[\eta(m),p],n'] \\
				& = [[m,n'],p] - [\eta([m,p]),n'] = [[m,n'],p] - [[m,p],n'].
				\end{align*}
				The procedure is similar for $i=7$. For $i=3$, it is necessary to make use of the Peiffer identity of $\mu$, \eqref{action_Lb_1b}, the definition of the action of $\lb{m}$ on $\lb{n}$ and $\lb{q}$ via $\eta$ and \eqref{comp_of_actions_Lb_1}:
				\begin{align*}
				[n,\xi_1(m',q)] & = [n,\mu \xi_1(m',q)] = [n,[m',q]] = [n,[\eta(m'),q]] \\
				& = [[n,\eta(m')],q] - [[n,q],\eta(m')] = [[n,m'],q] - [[n,q],m'].
				\end{align*}
				The conditions required in order to prove the identity for $i=4$ are the same used for $i=3$ except \eqref{comp_of_actions_Lb_1}, which is replaced by \eqref{comp_of_actions_Lb_2}.
				
				Finally, for $i=6$, due to \eqref{action_Lb_4b} and the definition of the action of $\lb{m}$ on $\lb{n}$ via $\eta$, we know that
				\begin{equation*}
				\xi_1([m,m'],q) = [\xi_1(m,q),m'] - [m,\xi_2(q,m')] = [\xi_1(m,q),m'] - [\eta(m),\xi_2(q,m')],
				\end{equation*}
				\noindent but applying \eqref{action_Lb_6b}, we get
				\begin{equation*}
				\xi_1([m,m'],q) = [\xi_1(m,q),m'] + [\eta(m),\xi_1(m',q)] = [\xi_1(m,q),m'] + [m,\xi_1(m',q)],
				\end{equation*}
				\noindent so $(6) = (6') - (6'')$ and the third identity holds. Note that \eqref{action_Lb_6a} and \eqref{action_Lb_6b} are necessary in order to check the fourth and fifth identities respectively.
				
				Checking that $(\mu,\eta)$ is indeed a Leibniz homomorphism follows directly from the definition of the action of $\lb{m}$ on $\lb{n}$ via $\eta$ together with the conditions \eqref{p_equivariant_Lb_1} and \eqref{p_equivariant_Lb_2}. Regarding the equivariance of $(\mu,\eta)$, given $(n,m)\in \lb{n} \rtimes \lb{m}$ and $(q,p) \in \lb{q} \rtimes \lb{p}$,
				\begin{align*}
				(\mu,\eta) ([(q,p),(n,m)]) & = (\mu,\eta) ([q,n] + [p,n] + \xi_2(q,m), [p,m])\\
				& = (\mu ([q,n]) + \mu([p,n]) + \mu\xi_2(q,m), \eta([p,m]))\\
				& = ([q,\mu (n)] + [p, \mu(n)] + [q,m],[p,\eta(m)])\\
				& = ([q,\mu (n)] + [p, \mu(n)] + [q,\eta(m)],[p,\eta(m)])\\
				& = [(q,p),(\mu(n),\eta(m))],
				\end{align*}
				\noindent due to the equivariance of $\mu$ and $\eta$, \eqref{p_equivariant_Lb_1}, \eqref{action_Lb_1a} and the definition of the action of $\lb{m}$ on $\lb{q}$ via $\eta$. Similarly, but using \eqref{p_equivariant_Lb_2} and \eqref{action_Lb_1b} instead of \eqref{p_equivariant_Lb_1} and \eqref{action_Lb_1a}, it can be proved that $(\mu,\eta) ([(n,m),(q,p)]) = [(\mu(n),\eta(m)),(q,p)]$.
				
				The Peiffer identity of $(\mu,\eta)$ follows easily from the homonymous property of $\mu$ and $\eta$, the definition of the action of $\lb{m}$ on $\lb{n}$ via $\eta$ and the conditions \eqref{action_Lb_2a} and \eqref{action_Lb_2b}.
			\end{proof}
			\begin{definition}
				The Leibniz crossed module $(\lb{n}\rtimes \lb{m}, \lb{q}\rtimes \lb{p},(\mu, \eta))$ is called the
				\emph{semidirect product} of the Leibniz crossed modules $(\lb{n},\lb{q},\mu)$ and $(\lb{m},\lb{p},\eta)$.
			\end{definition}
			Note that the semidirect product determines an obvious split extension of $(\lb{m},\lb{p},\eta)$ by $(\lb{n},\lb{q},\mu)$
			\[
			\xymatrix {
			(0,0,0) \ar[r] & (\lb{n},\lb{q},\mu) \ar[r] & (\lb{n}\rtimes \lb{m}, \lb{q}\rtimes \lb{p},(\mu, \eta)) \ar@<0.7ex>[r] & (\lb{m},\lb{p},\eta) \ar[r] \ar@<0.7ex>[l] & (0,0,0)
		    }
			\]
			Conversely, any split extension of $(\lb{m},\lb{p},\eta)$ by $(\lb{n},\lb{q},\mu)$ is isomorphic to
			their semidirect product, where the action of $(\lb{m},\lb{p},\eta)$ on $(\lb{n},\lb{q},\mu)$ is induced by the splitting homomorphism.

			\begin{remark}
				If $(\lb{m},\lb{p},\eta)$ and $(\lb{n},\lb{q},\mu)$ are Leibniz crossed modules and at least one of the following conditions holds,
				\begin{enumerate}
					\item $\Ann(\lb{n})=0=\Ann(\lb{q})$,
					\item $\Ann(\lb{n})=0$ and $[\lb{q},\lb{q}] = \lb{q}$,
					\item $[\lb{n},\lb{n}] = \lb{n}$ and $[\lb{q},\lb{q}] = \lb{q}$,
				\end{enumerate}
				\noindent an action of the crossed module $(\lb{m},\lb{p},\eta)$ on $(\lb{n},\lb{q},\mu)$ can be also defined as a homomorphism of Leibniz crossed modules from $(\lb{m},\lb{p},\eta)$ to $\ol{\Act}(\lb{n},\lb{q},\mu)$. In other words, under one of those conditions, $\ol{\Act}(\lb{n},\lb{q},\mu)$ is the actor of $(\lb{n},\lb{q},\mu)$ and it can be denoted simply by $\Act(\lb{n},\lb{q},\mu)$.
		\end{remark}
			\begin{example}\hfill
				
				\noindent (i) Let $\lb{n}$ be an ideal of a Leibniz algebra $\lb{q}$ and consider the crossed module $(\lb{n},\lb{q},\iota)$, where $\iota$ is the inclusion.  It is easy to check that $\ol{\Act}(\lb{n},\lb{q},\iota) = (X,Y,\iota)$, where $X$ is a Leibniz algebra isomorphic to  $\{ (d,D) \in \Bider(\lb{q}) \, | \, d(q), D(q) \in \lb{n} \text{ for all } q \in \lb{q} \}$ and $Y$ is a Leibniz algebra isomorphic to  $\{ (d,D) \in \Bider(\lb{q}) \, | \, (d_{|\lb{n}},D_{|\lb{n}}) \in \Bider(\lb{n}) \}$.
				
				\noindent (ii) Given a Leibniz algebra $\lb{q}$, it can be regarded as a Leibniz crossed module in two obvious ways, $(0,\lb{q},0)$ and $(\lb{q},\lb{q},\id_{\lb{q}})$. As a particular case of the previous example, one can easily check that $\ol{\Act}(0,\lb{q},0)\cong (0,\Bider(\lb{q}),0)$ and $\ol{\Act}(\lb{q},\lb{q},\id_{\lb{q}})\cong (\Bider(\lb{q}),\Bider(\lb{q}),\id)$.
				
				\noindent (iii) Every Lie crossed module $(\lie{n},\lie{q},\mu)$ can be regarded as a Leibniz crossed module (see for instance \cite[Remark~3.9]{CaKhLa}). Note that in this situation, both the multiplication and the action are antisymmetric. The actor of $(\lie{n},\lie{q},\mu)$ is $(\Der(\lie{q},\lie{n}), \Der(\lie{n},\lie{q},\mu), \Delta)$, where $\Der(\lie{q},\lie{n})$ is the Lie algebra of all derivations from $\lie{q}$ to $\lie{n}$ and $\Der(\lie{n},\lie{q},\mu)$ is the Lie algebra of derivations of the crossed module $(\lie{n},\lie{q},\mu)$ (see \cite{CaLa} for the details). Given $(d,D) \in \Bider(\lie{q},\lie{n})$, both $d$ and $D$ are elements in $\Der(\lie{q},\lie{n})$. Additionally, if we assume that at least one of the conditions from the previous lemma holds, then either $\Ann(\lie{n})=0$ or $[\lie{q},\lie{q}]=\lie{q}$. In this situation, one can easily derive from \eqref{axiom_bider_qn_3} that $\Bider(\lie{q},\lie{n})=\{(d,d) \ | \ d \in \Der(\lie{q},\lie{n})\}$. Besides, the bracket in $\Bider(\lie{q},\lie{n})$ becomes antisymmetric and, as a Lie algebra, it is isomorphic to $\Der(\lie{q},\lie{n})$. Similarly, $\Bider(\lie{n},\lie{q},\mu)$ is a Lie algebra isomorphic to $\Der(\lie{n},\lie{q},\mu)$ and $\ol{\Act}(\lie{n},\lie{q},\mu)$ is a Lie crossed module isomorphic to $\Act(\lie{n},\lie{q},\mu)$.
			\end{example}

\section{Center of a Leibniz crossed module}\label{section5}

Let us assume in this section that $(\lb{n},\lb{q},\mu)$ is a Leibniz crossed module that satisfies at least one of the conditions \eqref{condition1}--\eqref{condition3}. Denote by $\Z(\lb{q})$ the center of the Leibniz algebra $\lb{q}$, which in this case coincides with its annihilator (note that the center and the annihilator are not the same object in general). Consider the canonical homomorphism $(\vp,\psi)$  from $(\lb{n},\lb{q},\mu)$ to $\Act(\lb{n},\lb{q},\mu)$, as in Example \ref{canonical_morph}. It is easy to check that
\begin{align*}
	\Ker(\vp) & = \lb{n}^{\lb{q}} \quad \text{and} \quad	\Ker(\psi) = \st_{\lb{q}}({\lb{n}}) \cap \Z(\lb{q}),
\end{align*}
where $\lb{n}^{\lb{q}} = \{n \in \lb{n} \, | \, [q,n] = [n,q] = 0, \ \text{for all} \ q \in \lb{q}\}$ and $\st_{\lb{q}}({\lb{n}})  =  \{q \in \lb{q} \, | \, [q,n] = [n,q] = 0, \ \text{for all} \ n \in \lb{n}\}$.
Therefore, the kernel of $(\vp,\psi)$ is the Leibniz crossed module $(\lb{n}^{\lb{q}}, \st_{\lb{q}}({\lb{n}}) \cap \Z(\lb{q}),\mu)$. Thus, the kernel of $(\vp,\psi)$ coincides with the center of the crossed module $(\lb{n},\lb{q},\mu)$, as defined in the preliminary version of \cite[Definition 27]{AsCeUs} for crossed modules in modified categories of interest. This definition of center agrees with the categorical notion of center by Huq \cite{Huq} and confirms that our construction of the actor for a Leibniz crossed module is consistent.

\begin{example} Consider the crossed module $(\lb{n},\lb{q},\iota)$, where $\lb{n}$ is an ideal of $\lb{q}$ and $\iota$ is the inclusion. Then, its center is given by the Leibniz crossed module $(\lb{n} \cap \Z(\lb{q}), \Z(\lb{q}), \iota)$. In particular, the center of $(0,\lb{q},0)$ is $(0, \Z(\lb{q}), 0)$ and the center of $(\lb{q},\lb{q},\id_{\lb{q}})$ is $(\Z(\lb{q}), \Z(\lb{q}), \id)$.
\end{example}

By analogy to the definitions given for crossed modules of Lie algebras (see \cite{CaLa}), we can define the crossed module of \emph{inner biderivations} of $(\lb{n},\lb{q},\mu)$, denoted by $\Inn(\lb{n},\lb{q},\mu)$, as $\Im(\vp,\psi)$, which is obviously an ideal. The crossed module of \emph{outer biderivations}, denoted by $\Out(\lb{n},\lb{q},\mu)$, is the quotient of $\Act(\lb{n},\lb{q},\mu)$ by $\Inn(\lb{n},\lb{q},\mu)$.

Let
\[
\xymatrix {
(0,0,0) \ar[r] & (\lb{n},\lb{q},\mu) \ar[r] & (\lb{n}',\lb{q}',\mu') \ar[r] & (\lb{n}'',\lb{q}'',\mu'') \ar[r] & (0,0,0)
}
\]
\noindent be a short exact sequence of crossed modules of Leibniz algebras. Then, there exists a homomorphism of Leibniz crossed modules $(\al, \be) \colon (\lb{n}',\lb{q}',\mu') \to \Act(\lb{n},\lb{q},\mu)$ so that the following diagram is commutative:
\[
\xymatrix@C=1cm{
	(0,0,0) \ar[r] & (\lb{n},\lb{q},\mu) \ar[r] \ar[d] & (\lb{n}',\lb{q}',\mu') \ar[r] \ar[d]^{(\al,\be)} & (\lb{n}'',\lb{q}'',\mu'') \ar[r] \ar[d] & (0,0,0) \\
	(0,0,0) \ar[r] & \Inn(\lb{n},\lb{q},\mu) \ar[r] & \Act(\lb{n},\lb{q},\mu) \ar[r] & \Out(\lb{n},\lb{q},\mu) \ar[r] & (0,0,0)
}
\]
\noindent where $(\al, \be)$ is defined as $\al(n') = (d_{n'},D_{n'})$ and $\be(q') = ((\sigma^{q'}_1,\theta^{q'}_1), (\sigma^{q'}_2,\theta^{q'}_2))$, with
\begin{equation*}
d_{n'} (q) = - [q,n'], \qquad D_{n'} (q) = [n',q],
\end{equation*}
\noindent and
\begin{align*}
\sigma^{q'}_1(n) & = - [n,q'],  \qquad \theta^{q'}_1(n) = [q',n], \\
\sigma^{q'}_2(q) & = - [q,q'],  \qquad \theta^{q'}_2(q) = [q',q],
\end{align*}
\noindent for all $n' \in \lb{n}'$, $q' \in \lb{q}'$, $n \in \lb{n}$, $q \in \lb{q}$.


\begin{thebibliography}{10}
	
	\bibitem{AsCeUs}
	A.~F. Aslan, S.~\c{C}etin, and E.~\"{O}. Uslu.
	\newblock On crossed modules in modified categories of interest.
	\newblock arXiv:1602.05156, 2016.
	
	\bibitem{BoJaKe}
	F.~Borceux, G.~Janelidze, and G.~M. Kelly.
	\newblock On the representability of actions in a semi-abelian category.
	\newblock {\em Theory Appl. Categ.}, 14:244--286, 2005.
	
	\bibitem{BoCaDaUs}
	Y.~Boyaci, J.~M. Casas, T.~Datuashvili, and E.~\"{O}. Uslu.
	\newblock Actions in modified categories of interest with application to
	crossed modules.
	\newblock {\em Theory Appl. Categ.}, 30:882--908, 2015.
	
	\bibitem{CaCaKhLa}
	J.~M. Casas, R.~F. Casado, E.~Khmaladze, and M.~Ladra.
	\newblock Universal enveloping crossed module of a {L}ie crossed module.
	\newblock {\em Homology Homotopy Appl.}, 16(2):143--158, 2014.
	
	\bibitem{CaDaLa}
	J.~M. Casas, T.~Datuashvili, and M.~Ladra.
	\newblock Universal strict general actors and actors in categories of interest.
	\newblock {\em Appl. Categ. Structures}, 18(1):85--114, 2010.
	
	\bibitem{CaInKhLa}
	J.~M. Casas, N.~Inassaridze, E.~Khmaladze, and M.~Ladra.
	\newblock Adjunction between crossed modules of groups and algebras.
	\newblock {\em J. Homotopy Relat. Struct.}, 9(1):223--237, 2014.
	
	\bibitem{CaKhLa}
	J.~M. Casas, E.~Khmaladze, and M.~Ladra.
	\newblock Low-dimensional non-abelian {L}eibniz cohomology.
	\newblock {\em Forum Math.}, 25:443--469, 2013.
	
	\bibitem{CaLa}
	J.~M. Casas and M.~Ladra.
	\newblock The actor of a crossed module in {L}ie algebras.
	\newblock {\em Comm. Algebra}, 26(7):2065--2089, 1998.
	
	\bibitem{Huq}
	S.~A. Huq.
	\newblock Commutator, nilpotency, and solvability in categories.
	\newblock {\em Quart. J. Math. Oxford Ser. (2)}, 19:363--389, 1968.
	
	\bibitem{Lo_Lbalg}
	J.-L. Loday.
	\newblock Une version non commutative des alg\`ebres de {L}ie: les alg\`ebres
	de {L}eibniz.
	\newblock {\em Enseign. Math. (2)}, 39(3-4):269--293, 1993.
	
	\bibitem{LoPi}
	J.-L. Loday and T.~Pirashvili.
	\newblock Universal enveloping algebras of {L}eibniz algebras and (co)homology.
	\newblock {\em Math. Ann.}, 296(1):139--158, 1993.
	
	\bibitem{Mo}
	A.~Montoli.
	\newblock Action accessibility for categories of interest.
	\newblock {\em Theory Appl. Categ.}, 23(1):7--21, 2010.
	
	\bibitem{No}
	K.~Norrie.
	\newblock Actions and automorphisms of crossed modules.
	\newblock {\em Bull. Soc. Math. France}, 118(2):129--146, 1990.
	
	\bibitem{Or}
	G.~Orzech.
	\newblock Obstruction theory in algebraic categories. {I}, {II}.
	\newblock {\em J. Pure Appl. Algebra}, 2:287--314; ibid. 2 (1972), 315--340,
	1972.
	
\end{thebibliography}

\end{document}